\definecolor{gr}{rgb}   {0.,   0.69,   0.23 }
\definecolor{bl}{rgb}   {0.,   0.5,   1. }
\definecolor{mg}{rgb}   {0.85,  0.,    0.85}
\definecolor{yl}{rgb}   {0.8,  0.7,   0.}
\definecolor{or}{rgb}  {0.7,0.2,0.2}
\newtheorem{theorem}{Theorem} [section]
\newtheorem{lemma}[theorem]{Lemma}
\newtheorem{proposition}[theorem]{Proposition}
\newtheorem{remark}[theorem]{Remark}
\newtheorem{definition}[theorem]{Definition}
\newtheorem{conjecture}{Conjecture}[section]
\DeclareMathOperator*{\intt}{\int}
\DeclareMathOperator*{\supp}{supp}
\newcommand{\noi}{\noindent}
\newcommand{\Z}{\mathbb{Z}}
\newcommand{\R}{\mathbb{R}}
\newcommand{\T}{\mathbb{T}}
\let\Re=\undefined\DeclareMathOperator*{\Re}{Re}
\newcommand{\F}{\mathcal{F}}
\newcommand{\dl}{\delta}
\newcommand{\eps}{\varepsilon}
\newcommand{\ld}{\lambda}
\newcommand{\ft}{\widehat}
\newcommand{\dx}{\partial_x}
\newcommand{\dt}{\partial_t}
\renewcommand{\o}{\omega}
\newcommand{\les}{\lesssim}
\newcommand{\ges}{\gtrsim}
\newcommand{\jb}[1]
{\langle #1 \rangle}
\newcommand{\ind}{\mathbf 1}
\newcommand{\M}{\mathcal{M}}
\def\e{\varepsilon}
\newcommand{\N}{\mathbb{N}}
\newcommand{\J}{\mathcal{J}}
\newcommand{\TT}{\mathcal{T}}
\newcommand{\BT}{{\bf T}}
\tikzset{
	dot/.style={circle,fill=black,draw=black,inner sep=0pt,minimum size=0.5mm},
	>=stealth,
	}
\tikzset{
	ddot/.style={circle,fill=white,draw=black,inner sep=0pt,minimum size=0.8mm},
	>=stealth,
	}
\tikzset{decision/.style={ 
        draw,
        diamond,
        aspect=1.5
    }}
\tikzset{dia2/.style
={diamond,fill=white,draw=black,inner sep=0pt,minimum size=1mm},
	>=stealth,
	}
\tikzset{dia/.style
={star,fill=black,draw=black,inner sep=0pt,minimum size=1mm},
	>=stealth,
	}
\def\DeclareSymbol#1#2#3{\expandafter\gdef\csname MH@symb@#1\endcsname{\tikz[baseline=#2,scale=0.15]{#3}}}
\def\<#1>{\csname MH@symb@#1\endcsname}
\newtheorem*{ackno}{Acknowledgments}
\numberwithin{equation}{section}
\numberwithin{theorem}{section}
\begin{document}

\title[Norm inflation for dNLS]{Norm inflation for the derivative nonlinear Schr\"odinger equation}

\author[Y.~Wang, and Y.~Zine]
{Yuzhao Wang and Younes Zine}

\address{
Yuzhao Wang\\
School of Mathematics, 
University of Birmingham, 
Watson Building, 
Edgbaston, 
Birmingham\\
B15 2TT, 
United Kingdom}

\email{y.wang.14@bham.ac.uk}

\address{
Younes Zine,  School of Mathematics\\
The University of Edinburgh\\
and The Maxwell Institute for the Mathematical Sciences\\
James Clerk Maxwell Building\\
The King's Buildings\\
Peter Guthrie Tait Road\\
Edinburgh\\ 
EH9 3FD\\
United Kingdom}

\email{y.p.zine@sms.ed.ac.uk}

%
%

%

%
%
\subjclass[2020]{35Q55, 35R25}

\keywords{derivative nonlinear Schr\"odinger equation; ill-posedness; norm inflation}

\begin{abstract}
In this note, we study the ill-posedness problem for
the derivative nonlinear Schr\"odinger equation (DNLS)
in the one-dimensional setting.
More precisely, by using a ternary-quinary tree expansion of the Duhamel formula we prove norm inflation in Sobolev spaces below the (scaling) critical regularity for the gauged DNLS. 
This ill-posedness result is sharp since DNLS is known to be globally well-posed in $L^2(\R)$ \cite{HGKNV}.
The main novelty of our approach is to control the derivative loss from the cubic nonlinearity 
by the quintic nonlinearity with carefully chosen initial data.
\end{abstract}

\maketitle


\section{Setup of the problem}
\medskip
\subsection{The derivative nonlinear Schr\"odinger equation}
We consider the derivative nonlinear Schr\"odinger equation (DNLS)
defined on $\R$: 
\begin{align}
\begin{cases}
 i\dt u +    \dx^2 u   =  i   \dx ( |u |^2u ) \\
u|_{t = 0} = u_0, 
\end{cases}
\ (x, t) \in \R^2, 
\label{dNLS}
\end{align}

\noi
where $ u = u(t,x) $ is a complex-valued function.
The equation \eqref{dNLS} was derived in the plasma physics literature \cite{MOMT}  and has been extensively studied from a theoretical perspective. 
It is known that \eqref{dNLS} is completely integrable and admits infinitely 
many conservation laws \cite{KN}.
See \cite{MO,KNV} and reference therein for recent developments.

%

If $u(x,t)$ solves \eqref{dNLS} on $\R$, 
then, for any $\ld >0$, 
\[u^\ld: = \lambda^{\frac12} u(\lambda^2 t, \lambda x)\] 
also solves \eqref{dNLS} with scaled initial data 
$\phi^\ld : = \lambda^{\frac12} \phi(\lambda x)$.
This scaling invariance heuristically suggests that the critical Sobolev regularity 
of DNLS \eqref{GdNLS} 
is given by $s_\textup{crit} : = 0$.

Therefore, it is natural to conjecture the following:

\begin{conjecture}
\label{CON:con1}
The DNLS equation  \eqref{dNLS} 
is well-posed in $H^s(\R)$ for $s \ge 0$,
and ill-posed for $s < 0$.
\end{conjecture}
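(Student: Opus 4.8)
I address the ill-posedness half of Conjecture~\ref{CON:con1} (the well-posedness assertion for $s\ge 0$ being a consequence of the global $L^2$ theory quoted above together with persistence of regularity). The goal is to prove \emph{norm inflation}: for every $s<0$, every $\eps>0$ and every $t_0>0$ there is a Schwartz datum $u_0$ with $\|u_0\|_{H^s(\R)}<\eps$ whose smooth solution of \eqref{dNLS} satisfies $\|u(t_1)\|_{H^s(\R)}>\eps^{-1}$ for some $t_1\in(0,t_0)$. Since the derivative in $i\dx(|u|^2u)$ obstructs any naive low-regularity iteration, the first step is the (Hayashi--Ozawa) gauge transform $u\mapsto v:=\mathcal G(u)$, $\mathcal G(u)(x)=u(x)\exp\!\big(-i\int_{-\infty}^{x}|u(y)|^2\,dy\big)$, which conjugates \eqref{dNLS} to a gauged equation of schematic form $i\dt v+\dx^2 v = c_1\,v^2\dx\cj v + c_2\,|v|^4 v$: the derivative now falls on a single conjugated factor, the resonant part of the cubic interaction is structurally benign, and a derivative-free quintic term appears. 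As $\mathcal G$ is a bi-Lipschitz bijection on the relevant phase spaces, it is enough to prove norm inflation for the gauged equation and then transport it back, keeping track only of how $\mathcal G$ distorts the particular low-regularity profiles used below.

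\textbf{Tree expansion.} Next I would iterate the Duhamel formula $v(t)=e^{it\dx^2}u_0-i\int_0^t e^{i(t-t')\dx^2}\big(c_1 v^2\dx\cj v+c_2|v|^4v\big)(t')\,dt'$, substituting it into itself repeatedly. This yields a formal series $v=\sum_T\Xi(T)[u_0]$ indexed by rooted trees $T$ whose internal vertices have either three children (cubic term) or five (quintic term); $\Xi(T)$ is an explicit iterated space-time integral, and the number of derivatives appearing inside $\Xi(T)$ equals the number of ternary vertices. I would then truncate at low order, writing $v=e^{it\dx^2}u_0+\Xi_3[u_0]+\Xi_5[u_0]+\mathcal R[u_0]$ with $\Xi_3,\Xi_5$ the single-cubic- and single-quintic-vertex terms and $\mathcal R$ the sum over all larger trees, and run the argument on a short, $N$-dependent time interval on which this truncation is meaningful.

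\textbf{Data and the leading term.} Following the Christ--Colliander--Tao / Iwabuchi--Ogawa philosophy, I would take $u_0=u_{0,N}$ to be a superposition of finitely many wave packets at frequencies near $\pm N$, with carefully chosen small frequency gaps and amplitude $\sim R_N N^{-s}$ times a slowly varying cutoff, with $R_N$ tuned so that $\|u_{0,N}\|_{H^s}\to 0$ while the profile is ``large'' in a weaker sense. The frequencies would be engineered with two competing aims: (i) a five-fold interaction should be \emph{near-resonant}, returning to a base frequency with modulation $\approx0$, so that the time integral in $\Xi_5$ gains a full power of $t$ and $\Xi_5[u_{0,N}](t_N)$ has enormous $H^s$ norm at a suitable $t_N\to0^+$; (ii) the three-fold interactions should be quantitatively \emph{off-resonant} (modulation $\gtrsim N^2$), so that integration by parts in time converts the $N$-loss created by $\dx\cj v$ into a net gain and keeps $\Xi_3[u_{0,N}]$ subcritical. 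This interplay is precisely the announced mechanism of controlling the derivative loss from the cubic nonlinearity by the quintic one: on this data the cubic contributions are a small perturbation, while the derivative-free, resonant quintic term alone drives the inflation.

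\textbf{Tail estimate; the main obstacle.} It then remains to check that $e^{it\dx^2}u_{0,N}$ stays $\eps$-small in $H^s$ on $[0,t_N]$ (immediate), that $\Xi_5[u_{0,N}](t_N)$ dominates everything (previous step), and---the crux---that $\mathcal R[u_{0,N}]$ is $o(1)$ in $H^s$ on $[0,t_N]$; this last bound also supplies existence of the solution up to $t_N$ without a separate local theory. I expect this to be the principal difficulty: each ternary vertex in a tree $T$ threatens a factor $N$, and these must be absorbed by the combined smallness coming from the amplitude $R_N$ attached to each leaf, the shortness of $t_N$, the modulation gain at every off-resonant cubic vertex, and a bound on the number of trees of a given size. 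The heart of the argument is to choose $R_N$, $t_N$, the frequency gaps and the truncation order \emph{simultaneously} so that the resonant quintic term wins while every other tree---cubic-heavy ones above all---stays negligible despite accumulated derivative losses. Granting the tail bound, for a given $\eps$ one takes $N$ large to produce a smooth solution of the gauged equation with small initial $H^s$ norm and arbitrarily large $H^s$ norm at a time $t_N<t_0$; undoing $\mathcal G$ transfers this to \eqref{dNLS} and establishes ill-posedness for all $s<0$.
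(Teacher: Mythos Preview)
Your proposal contains a genuine and fatal gap at the very first step. You assert that the gauge transform $\mathcal G$ is ``a bi-Lipschitz bijection on the relevant phase spaces'' and that norm inflation for the gauged equation can simply be ``transported back'' to \eqref{dNLS}. But $\mathcal G(f)(x)=e^{-i\int_{-\infty}^x|f|^2}f(x)$ is not even defined on $H^s(\R)$ for $s<0$, and on smooth functions it is \emph{not} bi-Lipschitz in the $H^s$ topology for $s<0$: the phase depends on $\|f\|_{L^2}^2$, and in the norm-inflation construction the $L^2$ mass of the data and of the solution at time $t_N$ are both large (of order a positive power of $N$), so multiplication by $e^{i\theta}$ with $\theta'=|v|^2$ can move the low-frequency energy responsible for the $H^s$ blow-up back to high frequencies. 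There is no known mechanism to control this. The paper is explicit about this obstruction: Conjecture~\ref{CON:con1} is stated as a \emph{conjecture}, Theorem~\ref{THM:ill1} concerns only the gauged equation \eqref{GdNLS}, and a remark in Section~1.2 states that the range $-1\le s<0$ for \eqref{dNLS} itself remains open.

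A second, smaller issue concerns your proposed mechanism for taming the cubic term. You plan to arrange that all three-wave interactions are off-resonant (modulation $\gtrsim N^2$) and recover the derivative loss by integration by parts in time. The paper does not do this, and with data supported near two high frequencies it is not clear one can enforce such uniform non-resonance across \emph{all} ternary vertices in every tree. Instead, the paper accepts a crude loss of $N^k$ for a tree with $k$ ternary vertices (Lemma~\ref{LEM:nonlin1}) and compensates purely through parameter balance: choosing $R^2A^2\gg N$ ensures that each replacement of a quintic vertex by a ternary one trades $(RA)^4$ for $N(RA)^2\ll(RA)^4$, so the pure-quintic tree $\Xi^{3,5}_{0,1}$ dominates every other contribution (Lemma~\ref{LEM:nonlin3} and Proposition~\ref{PROP:lw_bd}). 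That elementary counting argument is what actually makes the tail estimate go through, not any modulation analysis.
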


Regarding well-posedness, Conjecture \ref{CON:con1} has seen some recent progress culminating in the breakthrough work \cite{HGKNV} which proves global well-posedness of \eqref{dNLS} in $L^2(\R)$.  We also mention the following works on the well-posedness theory for \eqref{dNLS} \cite{TF80,TF81,Herr2,Takaoka1, KNV, HGKV, HGKNV}. On the other hand, Biagioni and Linares showed a mild form of ill-posedness for \eqref{dNLS}: they showed that the data-to-solution map fails to be uniformly continuous (strictly) below $H^{\frac12}(\R)$.

In the study of \eqref{dNLS},
the gauge transform plays an important role.
Define the nonlinear map $\mathcal G: L^2(\R) \mapsto L^2 (\R)$ by
\begin{align}
\mathcal G f(x): = e^{-i \int_{-\infty}^x |f(y)|^2 dy} f(x).
\label{gauge}
\end{align}

\noi
Then, a smooth function $u$ satisfies \eqref{dNLS} if and only if $v = \mathcal{G}(u)$ satisfies
\begin{align}
\begin{cases}
i \dt v + \dx ^2 v = -i v^2 \dx\overline{ v} -\frac{1}{2} |v|^4 v \\
v|_{t = 0} = \phi, 
\end{cases}
\label{GdNLS}
\end{align}

\noi
with $\phi = \mathcal{G}(u_0)$.

Note that if $u$ solves \eqref{dNLS} in $L^2(\R)$ then $v = \mathcal{G}(u)$ solves \eqref{GdNLS} in $L^2(\R)$. Hence, we can transfer any well-posedness result for \eqref{dNLS} in $L^2(\R)$ to a well-posedness result for \eqref{GdNLS} in $L^2(\R)$, and vice-versa.

The aim of this note is to study the ill-posedness problem of  \eqref{GdNLS}
in Sobolev spaces $H^s$. One way of showing ill-posedness is to show the discontinuity of the solution map, 
which can be done through {\it norm inflation}. More precisely, given $s<0$, we say that \eqref{GdNLS} exhibits norm inflation in $H^s(\R)$ if for any $\eps >0$ and $\phi \in H^s (\R)$, there exists a solution $v$ to \eqref{GdNLS}
on $\R$ and $t\in (0,\e)$ such that

\noi
\begin{align}
\label{norm-inflation}
\|v(0) - \phi \|_{H^s (\R)} < \e \, \textup{ and } \, \|v(t) \|_{H^s (\R)} > \e^{-1} . 
\end{align}

\noi
Note that if \eqref{GdNLS} exhibits norm inflation in $H^s(\R)$, then it not possible to define a continuous data-to-solution map. We invite the reader to consult \cite{CCT1,CCT2,CCT2b,CK,CP,FO,IO,Kishimoto,OhWang,Oh2,Okamoto,Xia, Chevyrev, COW} and
references therein for more information on the norm inflation phenomena.

The main purpose of this note is to show norm inflation of the gauged DNLS equation
\eqref{GdNLS}. Our main result is the following:

\begin{theorem}
\label{THM:ill1}
Suppose $s < 0$.
Fix $\phi\in H^s (\R)$.
Then, given any $\e >0$, 
there exist a global solution $v_\e$ to the gauged equations \eqref{GdNLS} on $\R$ and $t\in (0,\e)$
such that \eqref{norm-inflation} holds.
\end{theorem}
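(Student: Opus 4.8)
The plan is to run the Picard-iteration (power-series) approach to norm inflation, as in \cite{IO,Oh2,Kishimoto,Okamoto}, adapted to the ternary--quinary structure of the Duhamel formula for \eqref{GdNLS} and to the one derivative of loss carried by its cubic term. First I would reduce to $\phi = 0$: it is enough to produce, for each $\eps > 0$, \emph{real-valued} data $\phi_\eps$ with $\|\phi_\eps\|_{H^s} < \eps$, a time $t_\eps \in (0,\eps)$, and a smooth global solution $v_\eps$ of \eqref{GdNLS} with $v_\eps(0) = \phi_\eps$ and $\|v_\eps(t_\eps)\|_{H^s} > \eps^{-1}$; the general case follows by approximating $\phi\in H^s(\R)$ by a fixed Schwartz function and placing the spectrum of $\phi_\eps$ at frequencies much higher than those of the approximant, so that the low--high interactions are perturbative by the $L^2$ theory of \cite{HGKNV}.

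Next, writing $S(t) = e^{it\dx^2}$ and iterating the Duhamel formula, one obtains a formal expansion $v = \sum_{k\ge 0}\Xi_{2k+1}(\phi)$, where $\Xi_1(\phi) = S(t)\phi$ and $\Xi_{2k+1}(\phi)$ is a finite sum of multilinear Duhamel iterates indexed by trees with $2k+1$ leaves whose internal vertices are ternary (carrying the multiplier of $v^2\dx\bar v$, hence a factor $\xi_3$ on the conjugated leaf) or quinary (carrying the multiplier of $|v|^4v$, with no derivative). The resonance function of a ternary vertex factors as $2(\xi_1+\xi_3)(\xi_2+\xi_3)$, $\xi_3$ being the conjugated frequency. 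I would then choose
\[
\phi_\eps \;=\; a\sum_{n\in A}\big(e^{inx}+e^{-inx}\big)\,\chi(x),
\]
a real-valued wave-packet superposition: $\chi$ a fixed smooth bump, $A\subset[N,2N]$ a set of $\sim N^{\ta}$ frequencies with pairwise gaps $\ges N^{1/2}$ (so that every genuinely non-resonant ternary self-interaction of $\phi_\eps$ has modulation $\ges N$, exactly offsetting the derivative loss), and $a = a(N)$, $\ta = \ta(s)\in(0,\tfrac12]$, $N = N(\eps)\to\infty$ tuned so that $\|\phi_\eps\|_{H^s}\sim a(\#A)^{1/2}N^s\to 0$.

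The mechanism is as follows. Because $\phi_\eps$ is real, $|\widehat{\phi_\eps}|^2$ is even, so the ``fully resonant'' ternary contribution --- whose coefficient is proportional to $\int_\R\xi\,|\widehat{\phi_\eps}(\xi)|^2\,d\xi$ --- vanishes, and the one-derivative loss never enters at leading order; what remains of the cubic either has large modulation (recovered by integrating by parts in time) or carries strictly more copies of $\phi_\eps$, hence smaller amplitude. The quintic vertex, by contrast, loses no derivative, and an explicit computation shows that its lowest iterate $\Xi_5(\phi_\eps)$, on the resonant configuration $e^{inx}\cdot\overline{e^{inx}}\cdot e^{in'x}\cdot\overline{e^{in'x}}\cdot e^{imx}$, produces a term of size $\ges t\,a^5(\#A)^2\,\chi^5e^{imx}$ whose $H^s$ norm exceeds $\|\phi_\eps\|_{H^s}$ for a suitable $t = t_\eps\to 0$; this is the source of the inflation.

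The main obstacle is to reconcile this growth with convergence of the full series on $[0,t_\eps]$. Indeed, iterating the \emph{resonant} quintic self-interactions only assembles into a \emph{bounded} gauge-type phase rather than a divergence, so one must carefully separate the genuinely inflating part of $\Xi_5(\phi_\eps)$ from this phase part, bound all the remaining trees in an auxiliary Bourgain-type norm in which $\|\phi_\eps\|$ is small and in which the multilinear estimates absorb the derivative loss of the ternary vertices (this absorption is precisely where the quintic structure, together with the separation property of $A$, is used), and sum the resulting convergent series. Carrying out this bookkeeping uniformly over all ternary--quinary trees --- tracking which interactions are phases, which are negligible, and which inflate --- is the technical heart of the matter; once it is in place, $v_\eps = \sum_k\Xi_{2k+1}(\phi_\eps)$ is a genuine (globally extended, via the $L^2$ theory) solution realising \eqref{norm-inflation}, and undoing the reduction to $\phi=0$ gives Theorem~\ref{THM:ill1}.
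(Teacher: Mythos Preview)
Your proposal has the right architecture (power series indexed by ternary--quinary trees, inflation driven by the quintic part), but the core inflation mechanism you describe does not work, and the paper's argument is both different and considerably simpler.

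The essential gap is the choice of data and the frequency output of the quintic iterate. You take a single high-frequency block $A\subset[N,2N]$ (symmetrised to $\pm A$ by real-valuedness) and claim inflation from the resonant quintic configuration $e^{inx}\overline{e^{inx}}e^{in'x}\overline{e^{in'x}}e^{imx}$, whose output sits at frequency $m\in A$. But for $s<0$ this carries the \emph{same} weight $N^s$ as the data, and---as you yourself note---iterating this resonant interaction exponentiates to a bounded phase, not a divergence. The genuine mechanism for norm inflation in negative Sobolev spaces is the high-to-low cascade: one needs the first nontrivial iterate to land near frequency $0$, where $\jb{\xi}^s\sim 1$. With single-scale data this is impossible for the quintic term: writing the output frequency as $\xi_1-\xi_2+\xi_3-\xi_4+\xi_5$ with each $\xi_i\in\pm[N,2N]$, the effective sign pattern is a sum of five $\pm 1$'s, hence odd, so the output always lives at $|\xi|\gtrsim N$. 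The paper's data is chosen precisely to defeat this: two bumps at $2N$ and $3N$ of width $A$ and height $R$, so that the quintic combination $2N-3N+2N-3N+2N=0$ lands on $Q_A$ and gives $\|\Xi_{0,1}^{3,5}(\phi)(t)\|_{H^s}\gtrsim f_s(A)\,tR^5A^4$.

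Your treatment of the cubic derivative loss is also more elaborate than necessary. Rather than exploiting real-valuedness to kill the fully resonant cubic piece and then integrating by parts in time against a modulation of size $N$ (which would force you into a Bourgain-space framework and a delicate normal-form bookkeeping over all trees), the paper simply observes that in the tree estimates each ternary vertex costs a factor $N$ from the derivative while each quinary vertex costs $(RA)^2$ from the two extra leaves; imposing $R^2A^2\gg N$ makes every tree with a ternary vertex strictly smaller than the corresponding purely quinary tree, and the whole series is controlled in the elementary algebra norm $\F L^1$ under $T(RA)^4\ll 1$. No resonance analysis, no integration by parts, and no $X^{s,b}$ spaces are needed. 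Finally, the reduction to general base points is done by a direct multilinear perturbation estimate (replacing one leaf by the fixed Schwartz function $\psi$), not by invoking the $L^2$ well-posedness of \cite{HGKNV}; the latter gives continuity in $L^2$, which says nothing about $H^s$ for $s<0$.
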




%

%

To the best of our best knowledge,
Theorem \ref{THM:ill1}
is the first ill-posedness result of \eqref{GdNLS} in terms of discontinuity of the flow map.
We prove Theorem \ref{THM:ill1} via a Fourier analytic method. In \cite{IO} Iwabuchi-Ogawa, showed norm inflation \eqref{norm-inflation} 
with $u_0 =0$ for quadratic nonlinear Schr\"odinger equations. Building upon the work of Bejenaru and Tao \cite{BT}, their approach is based on based on a Picard iteration scheme to show norm inflation using the high-to-low energy transfer in the first Picard iterate. It turns out that this method is widely applicable and works particularly well with power type nonlinearities  \cite{CP,FO,Kishimoto,OhWang,Oh2,Okamoto}; yielding norm inflation for almost optimal Sobolev exponents in most cases.
Oh \cite{Oh2} removed the constrain $u_0=0$ 
and proved norm inflation with general initial data for the cubic nonlinear Schr\"odinger equation by introducing a ternary tree expansion of the Duhamel formula.

The main difficulty when one tries to apply these methods to \eqref{GdNLS} comes from the presence of derivative in \eqref{GdNLS}.
Okamoto \cite{Okamoto} pointed out that Iwabuchi-Ogawa's argument
is applicable for some dispersive equations where a derivative appears in the nonlinearity.
In particular, he proved norm inflation for the Kawahara equation:
\begin{align*}
\partial_t u - \partial_x^5 u + \partial_x(u^2) = 0,
\end{align*}
in $H^s(\R)$ with $s<-2$,
which is sharp in the sense that the well-posedness is known in $H^s$ when $s > -2$.

In \cite{Okamoto}, the strong dispersion of the Kawahara equation 
plays a crucial rule in absorbing the 
derivative loss along the Picard iteration.
Unfortunately, the dispersion of \eqref{GdNLS} is 
not strong enough to handle the derivative loss.
In order to achieve norm inflation below the critical scaling exponent $s<0$, we need to further exploit the cubic-quintic structure of the nonlinearity.
Since the derivative loss in \eqref{GdNLS} only appears in the cubic part of the nonlinearity,
we observe that by carefully choosing our initial data,
the cubic part can be controlled by the quintic component of the nonlinearity.
Therefore, the problem then essentially reduces to showing norm inflation for the quintic nonlinear Schr\"odinger equation below $L^2(\R)$, which is already known in the literature, see \cite{Kishimoto}.

Another difficulty comes from the nonlinearity, 
which consists of a derivative cubic term 
and a non-derivative quartic term.
This non-homogeneous structure makes the series expansion of a solution complicated, see \eqref{homogen} below. 
To track these multilinear expressions, we introduce a notion of ternary-quinary trees
generalising the ternary trees in \cite{Oh2}. We note that Kishimoto \cite{Kishimoto} had to deal with this issue with a polynomial nonlinearity (consisting of several terms) without any derivative loss.

\noi
\subsection{Further remarks}

We conclude this section with several remarks.

\noi
\begin{remark}\rm \label{REM:sharp}
Since \eqref{dNLS} is globally well-posed in $L^2(\R)$ \cite{HGKNV}, norm inflation cannot occur in $H^s(\R)$ for $s \ge 0$ in view of the continuity of the Gauge transform \eqref{gauge} on smooth functions (i.e. from $C\big( [0,T] ;L^2(\R)\big)$ to itself, for any $T>0$, see \cite{HerrPHD} for a proof in the periodic setting). Therefore, Theorem \ref{THM:ill1} is sharp up to the endpoint $s=0$.  
\end{remark}

\noi

\begin{remark}\rm
Regarding the periodic setting, 
Theorem \ref{THM:ill1} still holds,
i.e. we can prove norm inflation for the gauged DNLS on $\T$ in $H^s(\T)$ for $s <0$ (note that the gauged equation \eqref{GdNLS} is modified on the torus, see \cite{Herr2, DNY1}.
The proof is a minor modification of the real line case. We also note that in the setting of Fourier-Lebesgue spaces, \eqref{dNLS} was shown to be locally well-posed in \cite{DNY1} in the whole subcritical regime (i.e. in Fourier-Lebesgue spaces that scale like $H^s(\T)$, for any $s >0$).
\end{remark}

\noi
\begin{remark}\rm
Using standard norm inflation techniques, we can also prove norm inflation for \eqref{dNLS} in $H^s(\M)$ for $\M = \R$ or $\T$ for $s < -1$. The $-1 \le s < 0$ case in Conjecture \ref{CON:con1} remains however open.
\end{remark}

\noi
\begin{remark}\rm
On the torus, another notion of probabilisitic criticality was developed in \cite{DNY2} giving rise to a probabilistic critical exponent $s_p$. More precisely, let $\{g_n\}_{n \in \Z}$ be an i.i.d. family of standard complex Gaussian random variables and define for any $s \in \R$, the function

\noi
\begin{align*}
u^s_0(\o) := \sum_{n \in \Z} \frac{g_n (\o)}{\jb{n}^{s + \frac12}} e^{i nx}.
\end{align*}

\noi
Then the exponent $s_p$ is defined to be the smallest $s \in \R$ such that the second Picard iterate of \eqref{dNLS} (after some appropriate frequency truncation) with initial data given by $u_0^s$ stays bounded in $H^s(\T)$; see \cite{DNY2} for more details. A computation shows that for \eqref{dNLS}, we have $s_p = s_c = 0$. Hence, it is unlikely that any random data theory (see for instance \cite{BO96, DNY2, NORS}) would allow us to go beyond the $L^2(\T)$ well-posedness threshold.
\end{remark}

The rest of the paper is organized as follows.
We introduce the notion of ternary-quinary trees
and establish multilinear estimates in Section 2.
In Section 3, we prove Theorem \ref{THM:ill1}.

\section{Preliminary analysis}
\label{SEC:2}

\subsection{Power series expansion indexed by trees}

We define two Duhamel integral operators
$\J$ and $\mathcal{K}$ by 

\noi
\begin{align}
\label{Duhamel1}
\begin{split}
\mathcal J[v_1,v_2,v_3](t) & := -i
\int_0^t S(t-t') v_1(t')v_2(t')  \dx \overline{v }_3(t')dt', \\
\mathcal{K}[v_1,v_2,v_3,v_4,v_5](t) & := -\frac{1}{2} \int_0^t  S(t-t') v_1(t') \overline{v}_2(t')v_3(t')\overline{v}_4(t')v_5(t')dt',
\end{split}
\end{align}

\noi
where  $S(t):=e^{it \dx ^2} $ denotes the linear propagator associated with (\ref{GdNLS}).
We also use the following shorthand notations:

\noi
\begin{align}\label{ternary_quinary_op}
\begin{split}
 \J^3[v] & := \J[v,v,v], \\
 \mathcal{K}^5[v] &:= \mathcal{K}[v,v,v,v,v].
\end{split}
\end{align}

\noi
In what follows, a solution $v$ to \eqref{GdNLS} with $v|_{t = 0} = \phi$ is a distribution which satisfies the Duhamel formulation

\noi
\begin{align}
\label{Duhamel}
v(t) = S(t)\phi +  \J^3[v](t) + \mathcal{K}^5[v](t).
\end{align}

\noi
It is expected that
the following Picard iteration
\begin{align}
\label{Picard}
P_0 (\phi) = S(t) \phi \,\, \textup{ and } \,\, P_j (\phi) = S(t) \phi + \J^3[P_{j-1} (\phi)] + \mathcal{K}^5[P_{j-1} (\phi)], \,\, j \in \N,
\end{align}

\noi
converges to a solution $v$ to \eqref{GdNLS} at least for short times.
If this is the case, 
then $v$ can be expressed as a series of multilinear terms in $\phi$:
\begin{align}
\label{power}
v (t) = \sum_{j\in 3 (\N \cup \{0\}) + 5 (\N \cup \{0\})}^\infty H_j [\phi]  (t),
\end{align}

\noi
where $H_j [\phi] $ consists of all homogeneous multilinear terms in $\phi$ of degree $j$.
For instance,
\begin{align}
\label{homogen}
\begin{split}
H_0 [\phi] (t)& : = S(t) \phi; \\
H_3 [\phi]  (t) & : = \J^3[S(t) \phi]; \\
H_5 [\phi]   (t)& : = \mathcal{K}^5[S(t) \phi]; \\
H_6 [\phi]   (t)& : = \mathcal{K}^3 \big[ \J^3[S(t) \phi] \big]; \\
H_8 [\phi]   (t)& : = \mathcal{K}^5 \big[ \J^3[S(t) \phi]\big] +  \J^3 \big[ \mathcal K^5[S(t) \phi]\big]; \\
& \cdots\cdots
\end{split}
\end{align}

\noi
Since the nonlinearity of \eqref{GdNLS} 
consists of both cubic and quintic nonlinearities,
the series expansion \eqref{power} and \eqref{homogen} gets more involved as $j$ gets larger.
Inspired by \cite{Chris,Oh2},
we shall use trees to index the series \eqref{power}.
To this purpose, we introduce the following notion of ternary-quinary trees: 

\begin{definition} \label{DEF:tree} \rm
(i) Given a partially ordered set $\TT$ with partial order $\leq$, 
we say that $b \in \TT$ 
with $b \leq a$ and $b \ne a$
is a child of $a \in \TT$,
if  $b\leq c \leq a$ implies
either $c = a$ or $c = b$.
If the latter condition holds, we also say that $a$ is the parent of $b$.

\smallskip 

\noi
(ii) 
A tree $\TT$ is a finite partially ordered set,  satisfying the following properties:
\begin{itemize}
\item Let $a_1, a_2, a_3, a_4 \in \TT$.
If $a_4 \leq a_2 \leq a_1$ and  
$a_4 \leq a_3 \leq a_1$, then we have $a_2\leq a_3$ or $a_3 \leq a_2$,

\item
A node $a\in \TT$ is called terminal, if it has no child.
A non-terminal node $a\in \TT$ is a node 
with  exactly {\it three} or {\it five} children,

\item There exists a maximal element $r \in \TT$ (called the root node) such that $a \leq r$ for all $a \in \TT$,

\item $\TT$ consists of the disjoint union of $\TT^0$ and $\TT^\infty$,
where $\TT^0$ and $\TT^\infty$
denote  the collections of non-terminal nodes and terminal nodes, respectively.
\end{itemize}
Given a tree $ \TT $, we denote by $\mathfrak n_3(\TT) $ (resp. $\mathfrak n_5(\TT) $) the number of non-terminal nodes which have three (resp. five) children.
\end{definition}

We also denote the collection of trees in the $(k,p)$-th generation (i.e. 
with $k$ parental nodes with three children and $p$ parental nodes with five children) by $\BT^{3,5}(k,p)$:
\begin{align}
\label{T35}
 \BT^{3,5}(k,p) & := \big\{ \TT :  \TT \text{ is a tree with } (\mathfrak n_3(\TT), \mathfrak n_5(\TT) )=(k,p) \}.
\end{align}

\noi
Note that the number $|\TT|$  of nodes in a tree $\TT \in \BT^{3,5} (k,p)$ is
$3k+5p+1$ for $k,p\in \N \cup \{0\}$.
In particular, the number of non-terminal nodes is $|\TT^0| = k+p$ and 
the number of terminal nodes is $|\TT^\infty| = 2k+4p+1$.

\begin{remark}\rm
The ternary-quinary trees defined in Definition \ref{DEF:tree}
generalise the ternary trees introduced in \cite{Oh2} 
by allowing some parental nodes 
to have five children.
It is easy to see that $\BT^{3,5} (k,0)$ and $\BT^{3,5} (0,p)$ 
consist of only ternary trees
and quinary trees respectively. 
\end{remark}

\begin{remark}\rm
\label{REM:def}
Let $\TT = \{ a \}_{a \in \TT} $ be a tree. Then, for every $a \in \TT$, we denote by $\TT_a$ the sub-tree whose root node is $a$. $\TT^0_a$ and $\TT^{\infty}_a$ are the associated sets of non-terminal and terminal nodes respectively.
Let $r\in \TT$ be the root node of $\TT$, then
\[
\TT = \TT_r.
\]
And similarly, $\TT^0 = \TT^0_r$ and $\TT^\infty = \TT^\infty_r$.
\end{remark}

We have the following bound
on the number of trees in $\BT^{3,5} (k,p)$:
\begin{lemma}\label{LEM:tree}
Let $\BT^{3,5}(k,p)  $ be as in \eqref{T35}.
Then, there exists $C>0$ such that 
\begin{align*}
\# \BT^{3,5}(k,p) & \leq C^{k+p},
\end{align*}
\noi
for all $(k,p) \in (\N \cup \{0\})^2$.
\label{BoundNumberOfTrees}
\end{lemma}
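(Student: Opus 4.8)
The plan is to count ternary-quinary trees recursively by peeling off the root node, exactly as one counts ordinary $m$-ary trees via the Catalan-type generating function argument. Let $a_n$ denote the number of trees in $\bigcup_{3k+5p+1=n} \BT^{3,5}(k,p)$, i.e. the number of ternary-quinary trees with exactly $n$ nodes (so $a_n = 0$ unless $n \equiv 1 \pmod 2$ and $n \ge 1$, and the count is further constrained by the $3k+5p+1$ structure). Since our target is only an upper bound of the form $C^{k+p}$, and since $|\TT| = 3k+5p+1 \le 5(k+p)+1$, it suffices to prove $a_n \le C_0^n$ for some absolute constant $C_0$; then $\#\BT^{3,5}(k,p) \le a_{3k+5p+1} \le C_0^{3k+5p+1} \le C_0 \cdot (C_0^5)^{k+p}$, so one may take $C = C_0^5$ (after absorbing the harmless factor $C_0$).

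First I would set up the recursion. A tree $\TT$ with $|\TT| \ge 2$ has a root $r$ which is non-terminal with either exactly three or exactly five children; by the ordering axioms of Definition \ref{DEF:tree}, deleting $r$ decomposes $\TT$ into an \emph{ordered} list of either three or five subtrees $\TT_{c_1}, \dots, \TT_{c_j}$ rooted at the children of $r$ (ordering comes from the second bullet of Definition \ref{DEF:tree}(ii), which forces the ancestor sets to be totally ordered and hence lets us linearly order the children; this is the same convention as in \cite{Oh2}). Hence
\begin{align}
\label{recursion}
a_n \le \sum_{\substack{n_1+n_2+n_3 = n-1 \\ n_i \ge 1}} a_{n_1} a_{n_2} a_{n_3} + \sum_{\substack{n_1+\cdots+n_5 = n-1 \\ n_i \ge 1}} a_{n_1} \cdots a_{n_5},
\end{align}
with $a_1 = 1$. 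Introduce the generating function $A(x) := \sum_{n \ge 1} a_n x^n$. Then \eqref{recursion} gives the functional inequality $A(x) \le x + A(x)^3 + A(x)^5$ coefficientwise. To conclude I would show that the \emph{equation} $F(x) = x + F(x)^3 + F(x)^5$ has a solution $F$ analytic near $x=0$ with $F(0)=0$: this follows from the analytic implicit function theorem applied to $\Phi(x,y) = y - x - y^3 - y^5$ at $(0,0)$, since $\partial_y \Phi(0,0) = 1 \ne 0$. Such $F$ has a positive radius of convergence $\rho > 0$, so its Taylor coefficients $f_n$ satisfy $f_n \le C_0^n$ for any $C_0 > 1/\rho$. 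A standard comparison (induction on $n$, using that all coefficients in \eqref{recursion} are nonnegative and that $f_n$ satisfies the corresponding equality) yields $a_n \le f_n \le C_0^n$, which is what we wanted.

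The main obstacle, such as it is, is the bookkeeping in justifying that the root-deletion map is injective onto ordered tuples of subtrees — i.e. that a ternary-quinary tree really is determined by the ordered list of its root's subtrees. This is where the second bullet of Definition \ref{DEF:tree}(ii) is used: it guarantees that for each node the set of its strict ancestors is totally ordered, which is exactly the condition that makes these "abstract" trees behave like plane (ordered) trees, so that the children of any node can be consistently linearly ordered and the decomposition is a bijection onto tuples. Once that is in place, \eqref{recursion} is immediate and the generating-function argument above is routine. An alternative to the implicit-function-theorem step, if one prefers to stay elementary, is to guess an explicit supersolution: check by induction that $a_n \le C_0^n$ directly from \eqref{recursion}, where $C_0$ is chosen large enough that $C_0^{-1} + \sum_{n_1+n_2+n_3=n-1} C_0^{-(n+1-n)} \le$ ... — concretely, picking $C_0 \ge 4$ and using $\sum_{n_1+n_2+n_3 = n-1} 1 = \binom{n-2}{2} \le 2^n$ and similarly $\sum_{n_1+\cdots+n_5=n-1} 1 \le 2^n$ makes the induction close with room to spare. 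Either route gives Lemma \ref{LEM:tree}.
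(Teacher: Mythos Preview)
Your main argument---set up the root-deletion recursion, pass to the generating function $A(x)$, and invoke the analytic implicit function theorem on $\Phi(x,y)=y-x-y^3-y^5$ at $(0,0)$ to conclude that the coefficients grow at most exponentially---is correct and gives a self-contained proof of the lemma. This is a genuinely different route from the paper's: there the authors simply assert $\#\BT^{3,5}(k,p)\le\#\BT^{3,5}(0,k+p)$ and defer to the purely quinary case handled in \cite{Oh2}. Your approach is longer but more robust; in fact the paper's displayed inequality is not literally correct (for instance $\#\BT^{3,5}(1,1)=8>5=\#\BT^{3,5}(0,2)$, since a tree in $\BT^{3,5}(1,1)$ may have either its ternary or its quinary internal node at the root), though this is harmless once one inserts an extra factor $2^{k+p}$ to record which of the $k+p$ internal nodes were ternary before padding them out to five children. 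Your recursion sidesteps that bookkeeping entirely.

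One caveat: the ``elementary supersolution'' you sketch at the end does not close as written. Plugging the inductive hypothesis $a_m\le C_0^m$ into your recursion gives
\[
a_n \;\le\; \Big[\binom{n-2}{2}+\binom{n-2}{4}\Big]\,C_0^{\,n-1},
\]
and no fixed $C_0$ makes this $\le C_0^n$ for all $n$, since the bracket grows polynomially in $n$. The implicit-function-theorem route (or, if you insist on staying elementary, an induction with a stronger hypothesis carrying extra polynomial decay, e.g.\ $a_n\le C\,C_0^n n^{-3/2}$ in the spirit of Catalan asymptotics) is what actually closes. This is only a side remark---your primary generating-function argument already proves the lemma.
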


\begin{proof} 
We observe that 
\[
\# \BT^{3,5}(k,p) \le \# \BT^{3,5}(0,k+p),
\]

\noi
where $\BT^{3,5}(0,k+p)$ consists of all quinary trees in the $(k+p)$-th generation.
Then the bound follows from the same argument as Lemma 2.3 in \cite{Oh2}.
\end{proof}

Next, we associate operators to trees in the following manner.
Fix $\phi $ a function.
Let $\TT \in \BT^{3,5}(k,p)$ for
$(k,p) \in \N^2 \cup\{(0,0)\}$ be a tree.
Given functions $\phi_1,\cdots,\phi_{2k+4p+1}$,
we formally associate $\Psi (\TT, \phi_1,\cdots,\phi_{2k+4p+1})$, a multilinear operator, by the following rules:

\noi
\begin{itemize}
\item Replace a non-terminal node by the Duhamel integral operator $\J$ (resp. $\mathcal{K}$) defined in \eqref{Duhamel1}
with its three (resp. five) children as arguments $u_1, u_2$ and $ u_3$ (resp.  $u_1, u_2, u_3, u_4$ and $ u_5$).
\item Replace a terminal node by the linear solution $S(t) \phi_j$, $j  =1, \cdots, 2k+4p+1$. 
\end{itemize}

\noi
In the following, we set $\Psi_\phi(\TT) = \Psi(\TT, \phi,\cdots,\phi)$.
Therefore, $\Psi_\phi$ denotes a mapping
from $ \bigcup_{k,p \geq 0}\BT^{3,5}(k,p) $ to  $\mathcal{D}'(\M\times (-T, T))$. 
Note that, if  $\TT \in \BT^{3,5}(k,p)$, 
then $\Psi_\phi(\TT) $ is  $(2k+4p+1)$-linear in $\phi$. At times, we might identify the multilinear expression $\Psi_{\phi}(\TT)$ with its associated tree $\TT$ when the base function $\phi$ is fixed.

For $ j \geq 0 $, we define $ \Xi_{0,0}(t) = S(t) \phi$ and 
\begin{align}
\Xi_{k,p}^{3,5} (\phi)
& : =  \sum_{ \substack{ \TT \in \BT^{3,5}(k,p) } } \Psi_\phi (\TT),
\label{SumTree}
 \end{align}
for $k+p \geq 1$.
Finally,
we can rewrite \eqref{power} the series expansion of the solution $v$
to \eqref{Duhamel} 
as:
\begin{align}
\label{power1}
v = \sum_{k,p\ge 0} \Xi_{k,p}^{3,5} (\phi) ,
\end{align}

\noi
where $ \Xi_{k,p}^{3,5} (\phi)$ consists of homogeneous multilinear terms in $\phi$ of degree $2k+5p+1$.
We group all the $j$-th generation of the Picard iterations in $\Xi_j (\phi)$, i.e. 
\begin{align}
\label{Picard_j}
\Xi_j (\phi) : = \sum_{j = k+p} \Xi_{k,p}^{3,5} (\phi) ,
\end{align}

\noi
Then \eqref{power1} can be further written as
\begin{align}
\label{power2}
v = \sum_{j=0}^\infty \Xi_j  (\phi).
\end{align}

\noi
In the following, we shall show the convergence of the series \eqref{power2}
with some specific initial data $\phi$.

\medskip
\subsection{Multilinear estimates}\label{SUBSEC:multi}
In this subsection, we establish some multilinear estimates
with special initial data.
Fix $N  \gg 1$ (to be chosen later).
We define $\phi$ by setting
\begin{align}
\ft \phi (\xi)= R\big\{\ind_{2N+ Q_A} (\xi)+ \ind_{3N+ Q_A}(\xi)\big\}, 
\label{phi10}
\end{align}
	
\noi
where $Q_A = \big[-\frac A2, \frac A2\big)$, 
$R = R(N)   $ is a real parameter,  and $A = A(N)\gg 1$, satisfying
$A\ll N$, is to be chosen later.
\label{phi1a}
\noi
Note that we have
\begin{align}
\| \phi\|_{H^s} \sim N^s R A^\frac{1}{2} \,\, \textup{ and } \,\, \|\phi\|_{\F L^1} \sim R A
\label{data}
\end{align}

\noi
for any $s \in \R$.

Now we are ready to state our key multilinear estimates.

\noi
\begin{lemma}\label{LEM:nonlin1} Let
$\phi$ be as in \eqref{phi10}. We have for $t \geq 0$,

\noi
\begin{align}
\| \Xi_{k,p}^{3,5} (\phi) (t) \|_{\F L^1}
& \leq (Ct)^{k+p} N^{k}  (RA)^{2k + 4p + 1},
\label{nonlin1a}\\
\| \Xi_{k,p}^{3,5} (\phi) (t)  \|_{\F L^\infty}
& \leq (Ct)^{k+p} N^k (RA)^{2k + 4p} R,
\label{nonlin1b}\\
\big\| \partial_x \big(\Xi_{k,p}^{3,5} (\phi)\big) (t)  \big\|_{\F L^{\infty}}
& \leq (Ct)^{k+p} N^{k+1} (RA)^{2k + 4p} R,
\label{nonlin1c}
\end{align}
for all $(k,p) \in (\N \cup \{0\})^2$ and $\TT \in \BT^{3,5}(k,p)$.
\end{lemma}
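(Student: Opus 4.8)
The plan is to prove \eqref{nonlin1a}--\eqref{nonlin1c} by induction on the number of internal nodes $k+p$, after reducing to a per-tree estimate. By \eqref{SumTree} and Lemma \ref{LEM:tree} it suffices to show that, for a suitable absolute constant $C_0$,
\[
\|\Psi_\phi(\TT)(t)\|_{\F L^1} \le (C_0 t)^{k+p} N^k (RA)^{2k+4p+1}
\quad\text{and}\quad
\|\Psi_\phi(\TT)(t)\|_{\F L^\infty} \le (C_0 t)^{k+p} N^k (RA)^{2k+4p} R
\]
for every $\TT \in \BT^{3,5}(k,p)$ and $t \ge 0$; summing over the at most $C^{k+p}$ trees in $\BT^{3,5}(k,p)$ and enlarging $C$ then yields \eqref{nonlin1a} and \eqref{nonlin1b}. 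Throughout I will use that $S(t-t')$ is an isometry on $\F L^1$ and on $\F L^\infty$ (it is a Fourier multiplier of modulus one), that $\F L^1$ is a Banach algebra, $\|fg\|_{\F L^1}\le \|f\|_{\F L^1}\|g\|_{\F L^1}$, and that $\|f_1\cdots f_m\|_{\F L^\infty}\le \|f_{i_0}\|_{\F L^\infty}\prod_{i\neq i_0}\|f_i\|_{\F L^1}$ for any index $i_0$ --- all consequences of Young's inequality on the Fourier side. The base case $k=p=0$ (where $\Psi_\phi(\TT)(t)=S(t)\phi$) is \eqref{data}, together with $\|\phi\|_{\F L^\infty}\sim R$.

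The point that controls the derivative in $\J$ is a crude but robust bound on Fourier supports. Since each leaf $S(t)\phi$ has $\widehat{S(t)\phi}$ supported in $(2N+Q_A)\cup(3N+Q_A)\subset\{|\xi|\le 4N\}$ (as $A\ll N$), and the Duhamel operators in \eqref{Duhamel1} only multiply functions --- which convolves Fourier supports --- and apply $S(\cdot)$, which preserves them (conjugation merely reflects the support), the output of any tree in $\BT^{3,5}(k,p)$, and of each of its sub-trees, has Fourier support in $\{|\xi|\le 4(2k+4p+1)N\}$, uniformly in $t$. Hence, whenever $g$ is (the conjugate of) such a sub-tree output built from $2k'+4p'+1$ leaves, $\|\dx g\|_X\le 4(2k'+4p'+1)N\,\|g\|_X$ for $X=\F L^1,\F L^\infty$. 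For the inductive step, if the root of $\TT\in\BT^{3,5}(k,p)$ is ternary, then $\Psi_\phi(\TT)=\J[\Psi_\phi(\TT_1),\Psi_\phi(\TT_2),\Psi_\phi(\TT_3)]$ with $\TT_i\in\BT^{3,5}(k_i,p_i)$, $\sum k_i=k-1$, $\sum p_i=p$, and using the isometry, the algebra property, the support bound applied to $\overline{\Psi_\phi(\TT_3)}$, and the inductive hypothesis,
\[
\|\Psi_\phi(\TT)(t)\|_{\F L^1}\le 4(2k_3+4p_3+1)N\int_0^t \prod_{i=1}^3\|\Psi_\phi(\TT_i)(t')\|_{\F L^1}\,dt' \le \frac{4(2k_3+4p_3+1)}{k+p}\,C_0^{k+p-1}\, t^{k+p}N^{k}(RA)^{2k+4p+1},
\]
where one uses $\int_0^t (t')^{k+p-1}\,dt' = t^{k+p}/(k+p)$ and the exponent bookkeeping is immediate; since $2k_3+4p_3+1\le 2k+4p-1\le 4(k+p)$ the prefactor is $\le 16\,C_0^{k+p-1}\le C_0^{k+p}$ once $C_0$ is chosen large (relative to the constant in \eqref{data}). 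If the root is quinary, $\Psi_\phi(\TT)=\mathcal{K}[\Psi_\phi(\TT_1),\dots,\Psi_\phi(\TT_5)]$ with $\sum k_i=k$, $\sum p_i=p-1$, and the same computation, now with no derivative loss, closes even more easily. The $\F L^\infty$ bound is identical except that at a ternary node one measures the differentiated factor $\dx\overline{\Psi_\phi(\TT_3)}$ in $\F L^\infty$ (using the $\F L^\infty$ inductive hypothesis on $\TT_3$) and the other two factors in $\F L^1$; the exponents of $N$, $RA$ and $R$ then match \eqref{nonlin1b}.

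Finally, \eqref{nonlin1c} follows from \eqref{nonlin1b}: since $\Xi_{k,p}^{3,5}(\phi)(t)$ has Fourier support in $\{|\xi|\le 4(2k+4p+1)N\}$ by the observation above, $\|\dx\Xi_{k,p}^{3,5}(\phi)(t)\|_{\F L^\infty}\le 4(2k+4p+1)N\,\|\Xi_{k,p}^{3,5}(\phi)(t)\|_{\F L^\infty}$, and $4(2k+4p+1)$ is absorbed into $C^{k+p}$ after enlarging $C$. The one genuine subtlety --- the step I expect to need the most care --- is precisely the derivative loss: naively, each of the $k$ ternary nodes loses a factor proportional to the number of leaves beneath it, and across nested ternary nodes these threaten to compound into a factorial $(k+p)!$ blow-up. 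What rescues the estimate is that each of the nested time integrations contributes a reciprocal factor ($\int_0^t(t')^{m-1}\,dt'=t^m/m$) that exactly offsets the $O(m)$ loss accumulated at that level, so that the net derivative cost is only the clean $N^k$ with a geometric constant $C^{k+p}$. Carrying the inductive hypothesis in the sharp form $(C_0t)^{k+p}N^k(RA)^{\bullet}$ --- rather than a $t$-independent variant --- is what makes this cancellation automatic.
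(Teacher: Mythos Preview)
Your proof is correct and follows essentially the same approach as the paper: reduce to a per-tree estimate via Lemma~\ref{LEM:tree}, induct on the tree size, control the derivative through the Fourier-support bound, and use the time integration $\int_0^t (t')^{k+p-1}\,dt' = t^{k+p}/(k+p)$ to absorb the linear-in-$(k+p)$ loss at each step. Your derivation of \eqref{nonlin1c} directly from \eqref{nonlin1b} via the support bound, and your explicit remark on why the nested derivative losses do not compound into a factorial, are details the paper leaves implicit (it simply writes ``the proofs of \eqref{nonlin1b} and \eqref{nonlin1c} follow from similar arguments''), but the underlying argument is the same.
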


\noi
\begin{proof} 
By Lemma \ref{LEM:tree} and \eqref{SumTree}, 
we only need to prove \eqref{nonlin1a}, \eqref{nonlin1b}, and  \eqref{nonlin1c} 
with $\Xi_{k,p}^{3,5} (\phi)$ replaced by $\Psi_\phi (\TT) $. 

To prove \eqref{nonlin1a}, 
we proceed by induction on 
$n = | \TT | $. If $|\TT| = 1$, i.e. the tree $\TT$ only has a single node
(and thus $\Psi_\phi (\TT) (t)=  \Xi_{0,0}(t) = S(t) \phi$), then 
\eqref{nonlin1a} follows from \eqref{data}. 
Fix $n \geq 1 $ and assume that 
\eqref{nonlin1a} holds for all trees $\TT \in \BT^{3,5}$ with $| \TT | \leq n$. 
Let $\TT \in \BT^{3,5}(k,p)$ with $| \TT | = n+1$, i.e. $n+1 = 3k + 5p +1$ for some $k$ and $p$.  Since
$\Psi_\phi(\TT) $ is  $|\TT^\infty|$-linear in $\phi$, we observe from \eqref{phi10} that the function $ \F [\Psi_\phi(\TT)]$ is supported on $ \{ \xi \in \R: | \xi | \leq 3 |\TT^\infty| (N+A) \} $. 
We divide our argument into two cases depending on the number of children of the root node.

\noi
{\it \underline{Case 1:}}
the root node $a$ of $\TT $ has three children.
We denote these three children by $a_s$, $s \in \{1,2,3\}$. 
By the notations in Remark \ref{REM:def}, let $ j_s := | \TT_{a_s} | =: 3k_s + 5p_s + 1  $ for $s \in \{1,2,3\}$. 
Then it follows that 

\noi
\begin{align}
k_1 + k_2 + k_3 + 1= k, \,\,p_1 + p_2 + p_3 = p
\label{KP1}
\end{align}
and
\[
\Psi_\phi(\TT)(t) =   -i
\int_0^t S(t-t') \Big( \Psi_{\phi}(\TT_{a_1})(t') \Psi_{\phi}(\TT_{a_2})(t')  \dx \overline{\Psi_{\phi}(\TT_{a_3})}(t') \Big) dt'.  \]
By unitarity of the linear propagator $S(t)$ in $\F L^1$, Young's inequality and the induction hypothesis, we have

\noi
\begin{align*}
\| \Psi_\phi(\TT)(t) \|_{\F L^1} & \leq 
\int_0^t  \big\| \Psi_{\phi}(\TT_{a_1})(t') \Psi_{\phi}(\TT_{a_2})(t') \overline{ \dx \Psi_{\phi}(\TT_{a_3})}(t') \big\|_{\F L^1} dt' \\
& \leq 3 \cdot |\TT^{\infty}_{a_3}| (N+ A) 
\int_0^t  \prod_{i=1}^3 \big\| \Psi_{\phi}(\TT_{a_i})(t') \big\|_{\F L^1} dt' \\
& \leq 3 \cdot |\TT^{\infty}_{a_3}| (N+ A) 
\int_0^t  \prod_{i=1}^3 (Ct)^{k_i+p_i} N^{k_i}  (RA)^{2k_i + 4p_i + 1} dt' \\
& = 3 \cdot |\TT^{\infty}_{a_3}| (N+ A) \int_0^t (Ct')^{k+p-1} N^{k-1}   (RA)^{2 (k-1) + 4p + 3} dt' \\
& \leq  6 \frac{|\TT^{\infty}|}{k+p}  C^{k+p-1} t^{k+p}  N^k (RA)^{2k+4p+1},
\end{align*}
where we used $ N \geq A $ in the last inequality. 
Then the desired estimate follows by choosing $C$ large enough and noting that $|\TT^{\infty}| = 2k + 4p +1 \le 5(k+p)$. 

\noi
{\it \underline{Case 2:}}
the root node $a$ has five children. We denote them by $a_s$ for $s \in \{1,2,3,4,5 \}$. 
Let $ j_s := | \TT_{a_s} | =: 3k_s + 5p_s + 1  $ for $s \in \{1,2,3,4,5\}$. 
We notice that 

\noi
\begin{align}
k_1 + k_2 + k_3  + k_4 + k_5= k, \,\, p_1 + p_2 + p_3 + p_4 + p_5 +1 = p,
\label{KP2}
\end{align}
and

\noi
\begin{align*}
\Psi_\phi(\TT)(t) & =   -i
\int_0^{t} S(t-t') \Big( \Psi_{\phi}(\TT_{a_1}) \overline{ \Psi_{\phi}(\TT_{a_2})}   \Psi_{\phi}(\TT_{a_3})  \overline{ \Psi_{\phi}(\TT_{a_4})}  \Psi_{\phi}(\TT_{a_5}) \Big) dt'.
\end{align*} 


\noi
As in the previous case, we bound

\noi
\begin{align*}
\| \Psi_\phi(\TT)(t) \|_{\F L^1} & \leq 
\int_0^t \big\|  \Psi_{\phi}(\TT_{a_1}) \overline{ \Psi_{\phi}(\TT_{a_2})}   \Psi_{\phi}(\TT_{a_3})  \overline{ \Psi_{\phi}(\TT_{a_4})}  \Psi_{\phi}(\TT_{a_5}) \big\|_{\F L^1} dt' \\
& \leq 
\int_0^t  \prod_{i=1}^5 \big\| \Psi_{\phi}(\TT_{a_i})(t') \big\|_{\F L^1} dt'\\
& \leq 
\int_0^t  \prod_{i=1}^5 (Ct)^{k_i+p_i} N^{k_i}  (RA)^{2k_i + 4p_i + 1} dt' \\
& \leq  \int_0^t (Ct')^{k+p-1} N^{k}   (RA)^{2 k + 4(p-1) + 5} dt' \\
& \leq    C^{k+p-1} t^{k+p} N^k (RA)^{2k+4p+1},
\end{align*}
which again gives \eqref{nonlin1a} provided $C \geq 1$. This finishes the proof of \eqref{nonlin1a}. The proofs of \eqref{nonlin1b} and \eqref{nonlin1c} follow from the similar arguments and we omit details.
\end{proof}

\noi
\begin{lemma}\label{LEM:nonlin2} Let
$\phi$ be as in \eqref{phi10}. Then we have for $t \geq 0$ and $s <0$,
\begin{align}
\big\| \Xi_{k,p}^{3,5} (\phi) (t) \big\|_{H^s} \leq C^{k+p} f_s(A)  t^{k+p} N^{k}(RA)^{2k+4p}R.
\label{nonlin2a}
\end{align}
	
\noi
Here, $f_s(A)$ is given by 
\begin{align}
f_s	(A) = 
\begin{cases}
1, & \text{if } s < -\frac 1 2, \\
(\log A)^\frac 12, & \text{if } s = -\frac 1 2, \\
A^{\frac{1}{2}+s} & \text{if } s > -\frac 1 2.
\end{cases}
\label{DEFf_s}
\end{align}
\end{lemma}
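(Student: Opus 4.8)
The plan is to deduce \eqref{nonlin2a} from the $\F L^1$ and $\F L^\infty$ bounds of Lemma \ref{LEM:nonlin1} by interpolating at the level of the Fourier support. By Lemma \ref{LEM:tree} it suffices to prove the estimate with $\Xi_{k,p}^{3,5}(\phi)$ replaced by a single $\Psi_\phi(\TT)$ with $\TT \in \BT^{3,5}(k,p)$. First I would recall that $\F[\Psi_\phi(\TT)](t,\cdot)$ is supported in $\{|\xi| \leq 3|\TT^\infty|(N+A)\} \subset \{|\xi| \les (k+p)N\}$ for $k+p\ge 1$ (and in $\{|\xi|\les N\}$ when $k=p=0$, since $A\ll N$). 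Hence on the support we have $\jb{\xi}^{s} \les (k+p)^{|s|} N^{s}$ for $s<0$ when $|\xi|\gtrsim N$, but we must be careful near $\xi=0$, where $\jb\xi^s$ blows up — this is exactly where $f_s(A)$ enters.

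The core computation is therefore a frequency-space Hölder/Cauchy--Schwarz estimate. Writing $g(t,\xi) = \F[\Psi_\phi(\TT)](t,\xi)$ supported on a set $E$ with $|E| \les (k+p)(N+A)$, I would split
\begin{align*}
\|\Psi_\phi(\TT)(t)\|_{H^s}^2 = \int_E \jb\xi^{2s}|g(t,\xi)|^2\, d\xi \leq \|g(t)\|_{\F L^\infty}^2 \int_E \jb\xi^{2s}\, d\xi.
\end{align*}
The integral $\int_E \jb\xi^{2s}\,d\xi$ is bounded, for $s<0$, by $\int_{\{|\xi|\les (k+p)N\}} \jb\xi^{2s}\,d\xi$, which is $\les (k+p)N$ if $s<-\tfrac12$... but that is too lossy in $N$. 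The sharper route is to notice that the Fourier support of $\Psi_\phi(\TT)$ — because $\phi$ is built from the two bumps $2N+Q_A$ and $3N+Q_A$ — actually consists of $O((k+p))$ intervals each of length $O((k+p)A)$, and the low-frequency piece near $\xi=0$ (which can only arise from near-cancellations among the $2k+4p+1$ input frequencies, each in $2N+Q_A$ or $3N+Q_A$) lives in an interval of length $O((k+p)A)$ around $0$. On that interval $\int \jb\xi^{2s}\,d\xi \les f_s(A)^2$ by the elementary computation behind \eqref{DEFf_s}: it is $\les 1$ for $s<-\tfrac12$ (convergent at infinity, but here bounded since the interval has finite length and $\jb\xi^{2s}\le 1$... actually $\les A$ trivially, so one needs $2s<-1$ to get the $\int_{|\xi|\les A}|\xi|^{2s}\,d\xi \les A^{1+2s}\les 1$), $\les \log A$ for $s=-\tfrac12$, and $\les A^{1+2s}$ for $s>-\tfrac12$. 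Away from $0$ one has $\jb\xi^{2s}\les N^{2s}$ and the contribution is $\les (k+p)A\cdot N^{2s}$ which, since $A\ll N$ and $N\ges 1$, is dominated by $f_s(A)^2$ as well (using $s<0$). Combining with $\|g(t)\|_{\F L^\infty} \les (Ct)^{k+p}N^k(RA)^{2k+4p}R$ from \eqref{nonlin1b} gives \eqref{nonlin2a} after absorbing the polynomial-in-$(k+p)$ losses into $C^{k+p}$.

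The main obstacle I anticipate is the bookkeeping of the Fourier support: one must verify that the frequency-space measure-estimate $\int_E \jb\xi^{2s}\,d\xi \les (k+p)^{O(1)} f_s(A)^2$ holds with the right power of $N$ (namely $N^0$, i.e. no $N$-loss except through $f_s$), which hinges on the fact that the portion of $E$ near the origin is an interval of length $\les (k+p)A$, not $\les (k+p)N$. This requires tracking, through the multilinear Duhamel recursion, that each factor $\Psi_\phi(\TT_{a_i})$ has Fourier support in a union of $O(k_i+p_i)$ translates of $Q_{O((k_i+p_i)A)}$ centered at points in $\Z\cdot N$, so that a convolution of $2k+4p+1$ such supports meeting a neighborhood of $0$ forces that neighborhood to have radius $\les (k+p)A$. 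One should also double-check the endpoint case $k=p=0$ separately, where $\|S(t)\phi\|_{H^s} \sim N^s R A^{1/2}$ by \eqref{data}, and confirm $N^s A^{1/2} \les f_s(A) R^0$-consistency; since $s<0$ and $A\ll N$, $N^s A^{1/2}\le A^{s+1/2}\les f_s(A)$ for $s\ge -\tfrac12$ and $N^s A^{1/2}\le A^{1/2}$, which for $s<-\tfrac12$ needs the extra room from $N^s$ with $N\gg A$ — so $N^s A^{1/2}\les 1 = f_s(A)$ holds once $N \ges A^{1/(2|s|)}$, a constraint on the later choice of parameters that is harmless. Finally, the bounds \eqref{nonlin1c} for $\partial_x$ are not needed for this lemma but will be used in the convergence argument of Section 3.
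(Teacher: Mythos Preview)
Your approach is correct and follows the same core idea as the paper: bound $\|\Psi_\phi(\TT)(t)\|_{H^s}$ by $\|\jb{\xi}^s\|_{L^2(E)}\cdot\|\Psi_\phi(\TT)(t)\|_{\F L^\infty}$, then control the weight factor via the Fourier-support structure induced by the two-bump initial datum \eqref{phi10}. Your support bookkeeping is in fact sharper than the paper's --- you observe that the possible output frequencies lie in $O(k+p)$ intervals of width $O((k+p)A)$ centred at integer multiples of $N$, whereas the paper just uses the cruder count of $2^{|\TT^\infty|}$ such intervals; either is absorbable into $C^{k+p}$.

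The one genuine difference is that the paper, after isolating $\|\jb{\xi}^s\|_{L^2(E)}$, re-opens the Duhamel integral at the root node and bounds the integrand via the $\F L^1$ bounds \eqref{nonlin1a} on all but one subtree together with the $\F L^\infty$ bounds \eqref{nonlin1b} or \eqref{nonlin1c} on the remaining one (the derivative bound \eqref{nonlin1c} being needed precisely when the root has three children). You instead invoke \eqref{nonlin1b} directly on the full tree, which is cleaner and avoids \eqref{nonlin1c} altogether. Two small corrections: your worry about the base case $k=p=0$ is unnecessary, since $N^sA^{1/2}\le A^{s+1/2}\les f_s(A)$ holds automatically for every $s<0$ once $A\le N$; and your parenthetical remark that \eqref{nonlin1c} ``will be used in the convergence argument of Section~3'' is off --- in the paper it is used only in the proof of this very lemma, and your route bypasses it entirely.
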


\noi
\begin{proof} 
Recall that $|\TT| = 3k+5p+1$ for $\TT \in \BT^{3,5}(k,p)$.
In view of Lemma \ref{LEM:tree}, \eqref{nonlin2a} follows from the bound

\noi
\begin{align}
\| \Psi_\phi (\TT)(t) \|_{H^s} \leq C^{k+p} f_s(A)  t^{k+p} N^{k}(RA)^{2k+4p}R,
\label{nonlin2}
\end{align}

\noi
for all $(k,p) \in (\N \cup \{0\})^2$ and $\TT \in \BT^{3,5}(k,p)$. 
We note that $\Psi_{\phi} (\TT)$ is $|\TT^\infty|$-linear in $\phi$,
and thus the support of $\F (\Psi_{\phi} (\TT))$ 
is contained in at most $2^{|\TT^\infty|}$
intervals of length $|\TT^{\infty}| \cdot A$. Furthermore, since
 $\jb \xi ^s$ is a decreasing function in $|\xi|$ for $s<0$, we thus have
 
\noi
\begin{align}
\label{decay}
\| \jb{\xi}^s \|_{L^2 (\supp \F[\Psi_\phi(\TT)  (t)])}  \le 
2^{\frac{|\TT^\infty|}2} f_s \big(|\TT^{\infty}|A\big),
\end{align}

\noi
uniformly in $t \ge 0$.

If $| \TT | = 1$, then the claimed result follows from \eqref{data}. We now assume that $|\TT| >1$ so that $k+p \ge 1$. Let us first assume that the root node of $\TT$ has three children $a_s$ for $s \in \{1,2,3\}$. 
Then by \eqref{decay}, Lemma \ref{LEM:nonlin1}, and \eqref{KP1}, we get  
\begin{align*}
 \big\| \Psi_{\phi}(\TT)(t) \big\|_{H^s}  & \leq  \| \jb{\xi}^s \|_{L^2 (\supp \F[\Psi_\phi(\TT)  (t)])}  \int_0^t \| \Psi_{\phi}(\TT_{a_1}) \Psi_{\phi}(\TT_{a_2})  \dx  \overline{\Psi_{\phi}(\TT_{a_3})}  \|_{\F L^{\infty}}    dt' \\
 & \le 2^{\frac{|\TT^\infty|}2} f_s \big(|\TT^{\infty}|A\big)  \int_0^t \|  \Psi_{\phi}(\TT_{a_1})  \|_{\F L^1} \| \Psi_{\phi}(\TT_{a_2}) \|_{\F L^1} \| \partial_x \Psi_{\phi}(\TT_{a_3})  \|_{\F L^{\infty}}    dt' \\
 & \le 2^{\frac{|\TT^\infty|}2} f_s \big(|\TT^{\infty}|A\big)   \int_0^t \prod_{i=1}^2 (Ct)^{k_i+p_i} N^{k_i}  (RA)^{2k_i + 4p_i + 1} \\
 & \hphantom{XXXXXXXXXX} \times (Ct)^{k_3+p_3} N^{k_3+1}  (RA)^{2k_3 + 4p_3}
 R   dt' \\
 & \le 2^{\frac{|\TT^\infty|}2} \frac{|\TT^{\infty}|}{k + p} f_s(A) (Ct)^{k+p} N^{k} (RA)^{2k+4p}R \\
 & \le C^{k+p} f_s(A)  t^{k+p}N^k (RA)^{2k+4p}R,
\end{align*}

\noi
for $C > 0$ large enough. Note that in the last inequality we used $|\TT^{\infty}| = 2k+4p + 1 \le 5 (k+p)$.

We then consider the case when the root node of $\TT$ has 
five children $a_s$ for $s \in \{1,2,3,4,5\}$. 
Then by \eqref{decay}, Lemma \ref{LEM:nonlin1}, and \eqref{KP2}, we get  
\begin{align*}
 \big\| \Psi_{\phi}(\TT)(t) \big\|_{H^s}  & \leq  \| \jb{\xi}^s \|_{L^2 (\supp \F[\Psi_\phi(\TT)  (t)])}  
 \int_0^t \Big\|\prod_{i=1}^5 \Psi_{\phi}(\TT_{a_i})  \Big\|_{\F L^{\infty}}    dt' \\
 & \le 2^{\frac{|\TT^\infty|}2} f_s \big(|\TT^{\infty}|A\big) 
 \int_0^t \prod_{i=1}^4 \|  \Psi_{\phi}(\TT_{a_i})  \|_{\F L^1}  \| \Psi_{\phi}(\TT_{a_5})  \|_{\F L^{\infty}}    dt' \\
 & \le 2^{\frac{|\TT^\infty|}2} f_s \big(|\TT^{\infty}|A\big)   \int_0^t \prod_{i=1}^4 (Ct)^{k_i+p_i} N^{k_i}  (RA)^{2k_i + 4p_i + 1} \\
 & \hphantom{XXXXXXXXXX} \times (Ct)^{k_5+p_5} N^{k_5}  
 (RA)^{2k_5 + 4p_5}
 R   dt' \\
 & \le 2^{\frac{|\TT^\infty|}2}  \frac{|\TT^{\infty}|}{k + p} f_s(A) (Ct)^{k+p} N^{k} (RA)^{2k+4p}R \\
 & \le C^{k+p} f_s(A)  t^{k+p}N^k (RA)^{2k+4p}R,
\end{align*}

\noi
for $C>0$ large enough. This shows \eqref{nonlin2} and finishes the proof.
\end{proof}

As a consequence of the last lemma, we have the 
following estimate on the power expansion \eqref{power2}. It essentially states that under some choice of parameters the main contribution to the Picard iterates \eqref{Picard_j} comes from the multilinear terms corresponding to the quintic nonlinearity (or trees for which parents nodes all have five children), i.e. $\{\Xi_{0,p} (\phi)\}_{p \ge 1}$.

\noi
\begin{lemma}\label{LEM:nonlin3} 
Let $\phi$ be as in \eqref{phi10}. 
If $R^2 A^2 \gg N $, 
then for any $j \geq 1$ we have for $t\geq0$,
\begin{align}
\| \Xi_j(\phi)(t)  \|_{H^s} \leq C^j
 f_s(A)    t^j(RA)^{4j}R,
\label{nonlin3}
\end{align}

\noi
where $\Xi_j$ is given in \eqref{Picard_j}.
\end{lemma}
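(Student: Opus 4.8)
The plan is to deduce \eqref{nonlin3} directly from the pointwise (in $(k,p)$) bound of Lemma \ref{LEM:nonlin2} by a finite summation, together with the algebraic structure of the exponents. By the definition \eqref{Picard_j}, we have $\Xi_j(\phi) = \sum_{k+p=j} \Xi_{k,p}^{3,5}(\phi)$, where the sum runs over the pairs $(k,p) \in (\N \cup \{0\})^2$ with $k+p = j$, of which there are exactly $j+1$. Applying the triangle inequality in $H^s$ and then Lemma \ref{LEM:nonlin2} to each summand, it suffices to bound, uniformly over $0 \le k \le j$ (with $p = j-k$), the quantity
\[
C^{k+p} f_s(A)\, t^{k+p} N^k (RA)^{2k+4p} R \;=\; C^{j} f_s(A)\, t^{j} \, N^k (RA)^{2k+4p} \, R.
\]

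The key observation is that when $k + p = j$ we have $2k + 4p = 4j - 2k$, so that
\[
N^k (RA)^{2k+4p} \;=\; N^k (RA)^{4j-2k} \;=\; (RA)^{4j}\left(\frac{N}{R^2 A^2}\right)^{k}.
\]
Under the hypothesis $R^2 A^2 \gg N$ we have $N/(R^2A^2) \le 1$, hence $\big(N/(R^2A^2)\big)^{k} \le 1$ for every $k \ge 0$, and therefore $N^k (RA)^{2k+4p} \le (RA)^{4j}$ for all admissible $(k,p)$. This is exactly the mechanism described in the remark preceding the statement: each ternary (cubic) parent node costs a factor $N$ but only recovers a factor $(RA)^2$, and the hypothesis guarantees that the latter dominates, so the purely quinary trees ($k=0$) carry the worst bound.

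Summing the resulting estimate $C^{j} f_s(A)\, t^{j} (RA)^{4j} R$ over the $j+1$ pairs $(k,p)$ with $k+p=j$ gives
\[
\| \Xi_j(\phi)(t) \|_{H^s} \;\le\; (j+1)\, C^{j} f_s(A)\, t^{j} (RA)^{4j} R,
\]
and absorbing $j+1 \le 2^j$ into the geometric factor (replacing $C$ by $2C$, still denoted $C$) yields \eqref{nonlin3}. I do not expect a genuine obstacle here: the argument is essentially bookkeeping. The only points that require care are (i) verifying that the constant $C$ in Lemma \ref{LEM:nonlin2} is uniform in $(k,p)$, so that the exponent trade $N \leftrightarrow (RA)^2$ can be applied term by term, and (ii) checking that the number of generation-$j$ pairs is $j+1$ (and not something growing faster), which follows since $\#\{(k,p)\in(\N\cup\{0\})^2 : k+p=j\} = j+1$.
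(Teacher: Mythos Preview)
Your proof is correct and follows essentially the same approach as the paper: reduce to the $(k,p)$-wise bound of Lemma~\ref{LEM:nonlin2}, then use the algebraic identity $N^k(RA)^{2k+4p}=(RA)^{4j}(N/R^2A^2)^k$ together with the hypothesis $R^2A^2\gg N$ to dominate every term by the purely quinary one. Your version is in fact slightly more explicit than the paper's in tracking the $j+1$ summands and absorbing them into the constant.
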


\noi
\begin{proof} Fix $j \geq 1$ and $t \geq 0$. 
By  \eqref{Picard_j}, it suffices to show
\begin{equation}
\| \Xi_{k,p}^{3,5} (\phi) (t) \|_{H^s}  \leq C^j
 f_s(A)  t^j(RA)^{4j}R
\label{bd1}
\end{equation}
for any $\TT \in \BT^{3,5}(k,p)$ with $k+p =j$. 
By Lemma \ref{LEM:nonlin2},
it suffices to show
\[ N^{j-p} (RA)^{2(j+p) } \ll (RA)^{4j}, \]
which is a consequence of the assumption $R^2 A^2 \gg N$.
\end{proof}

The following lemma shows that the multilinear expressions
$\Xi_j$ defined in \eqref{Picard_j}
are stable under suitable perturbations.

\begin{lemma}\label{LEM:nonlin4}
Let $\phi$ be as in \eqref{phi10} and $\psi \in \F^{-1} C^{\infty}_0(\R)$ with 
$\| \psi\|_{\F L^1} \les RA$ and
$\supp(\ft \psi) \subset [-M,M]$ for some $M\geq 0$.  
We further assume $ R^2 A ^2 \gg N$ and $N \gg M$. Then, there exists $C >0$ such that 
\begin{align}
\| \Xi_j (\phi + \psi)(t)
- \Xi_j (\phi)(t)\|_{L^2} \leq
 C^j \| \psi\|_{L^2}  (tR^4 A^4)^j
 \label{diff_esti}
\end{align}
for all $j \in \N$.
\end{lemma}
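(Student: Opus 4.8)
The plan is to prove the perturbation estimate \eqref{diff_esti} by the same induction-on-tree-size scheme used for Lemma \ref{LEM:nonlin1} and Lemma \ref{LEM:nonlin2}, but now tracking the \emph{difference} $\Psi_{\phi+\psi}(\TT) - \Psi_\phi(\TT)$ for each tree $\TT$. First I would reduce, via Lemma \ref{LEM:tree} and \eqref{Picard_j}, the claim \eqref{diff_esti} to the per-tree bound
\begin{align*}
\big\| \Psi_{\phi+\psi}(\TT)(t) - \Psi_{\phi}(\TT)(t)\big\|_{L^2} \leq C^{k+p} \|\psi\|_{L^2} (tR^4A^4)^{k+p}
\end{align*}
for every $\TT \in \BT^{3,5}(k,p)$. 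The base case $|\TT|=1$ is immediate since then the left side is $\|S(t)\psi - 0\|_{L^2}$... wait, more precisely $\Psi_{\phi+\psi}(\TT) - \Psi_\phi(\TT) = S(t)(\phi+\psi) - S(t)\phi = S(t)\psi$, whose $L^2$ norm equals $\|\psi\|_{L^2}$, matching the claim. For the inductive step I would use the standard multilinear telescoping identity: if $\TT$ has root with children $\TT_{a_1},\dots,\TT_{a_m}$ ($m\in\{3,5\}$), then $\Psi_{\phi+\psi}(\TT) - \Psi_\phi(\TT)$ is a sum of $m$ terms, in each of which one factor $\Psi_{\phi+\psi}(\TT_{a_i}) - \Psi_\phi(\TT_{a_i})$ is replaced by the corresponding difference while the remaining factors are either $\Psi_{\phi+\psi}(\TT_{a_\ell})$ or $\Psi_\phi(\TT_{a_\ell})$.

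Next I would estimate each of these telescoped terms by putting the difference factor in $L^2$ (using the inductive hypothesis) and the remaining factors in $\F L^1$ (for the cubic vertex, where one of the surviving factors additionally carries a $\partial_x$, which costs a frequency factor $\lesssim |\TT^\infty_{a_i}|(N+A) \lesssim N$), exactly mirroring the estimates in the proof of Lemma \ref{LEM:nonlin2}. The key point is that $\Psi_{\phi+\psi}(\TT_{a_\ell})$ obeys the \emph{same} $\F L^1$ bounds as $\Psi_\phi(\TT_{a_\ell})$ because $\phi+\psi$ has the same qualitative profile as $\phi$: indeed $\|\phi+\psi\|_{\F L^1} \lesssim \|\phi\|_{\F L^1} + \|\psi\|_{\F L^1} \lesssim RA$, and the Fourier support of $\phi + \psi$ is contained in $[2N+Q_A] \cup [3N + Q_A] \cup [-M,M]$, which since $N \gg M$ (and $A \ll N$) still lies in a bounded number of intervals of length $\lesssim A$ around frequencies $\lesssim N$; hence Lemma \ref{LEM:nonlin1} applies verbatim to $\phi + \psi$ (with possibly a slightly larger constant $C$). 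Carrying the $N$-loss from the derivative through the recursion produces, for a tree in $\BT^{3,5}(k,p)$, a factor $N^k (RA)^{2k+4p}$ in front of $\|\psi\|_{L^2}$, which under $R^2A^2 \gg N$ is $\leq (R^4A^4)^{k+p} = (R^4A^4)^j$ — just as in Lemma \ref{LEM:nonlin3}. Collecting the time integrations gives the $t^j$ factor and the combinatorial constant $C^j$ absorbs the $m$-fold sum and the polynomial-in-$(k,p)$ prefactors as before.

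The main obstacle I anticipate is purely bookkeeping rather than conceptual: one must make sure that in the telescoped sum \emph{every} surviving subtree factor — whether it is built from $\phi$ or from $\phi+\psi$ — can be estimated by Lemma \ref{LEM:nonlin1}, which as stated is only for $\phi$ of the form \eqref{phi10}; so I would first record (as a one-line remark) that Lemma \ref{LEM:nonlin1}'s proof only used the two structural facts $\|\phi\|_{\F L^1}\lesssim RA$ and that $\widehat\phi$ is supported in $O(1)$ intervals of length $O(A)$ located at frequencies $O(N)$, both of which persist for $\phi+\psi$ under the hypotheses $\|\psi\|_{\F L^1}\lesssim RA$, $\supp\widehat\psi\subset[-M,M]$, $N\gg M$. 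A secondary subtlety is the constant: the telescoping produces $m \le 5$ terms at each vertex and the induction must be set up so that the constant $C$ (chosen large, depending only on absolute quantities) dominates this branching together with the factors $|\TT^\infty|/(k+p) \le 5$ that appear from the time integral $\int_0^t (t')^{k+p-1}dt' = t^{k+p}/(k+p)$, exactly as in the earlier lemmas. With these two points addressed, the induction closes and \eqref{diff_esti} follows.
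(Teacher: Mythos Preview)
Your proposal is correct and follows essentially the same approach as the paper. The only organizational difference is that the paper expands $\Psi_{\phi+\psi}(\TT)-\Psi_\phi(\TT)$ fully by multilinearity into all $2^{|\TT^\infty|}-1$ terms $\Psi(\TT;\phi_1,\dots,\phi_{2k+4p+1})$ with $\phi_i\in\{\phi,\psi\}$ and at least one $\psi$, and then (invoking an induction ``as in Lemma~\ref{LEM:nonlin1}'') places one $\psi$-leaf in $L^2$ and the remaining leaves in $\F L^1$; you instead telescope at the root and recurse on the subtree difference. Both routes use the same ingredients (one factor in $L^2$, the rest in $\F L^1$, the derivative costing a factor $\lesssim N$ via the Fourier support, and finally $N\ll R^2A^2$), and your observation that Lemma~\ref{LEM:nonlin1} applies verbatim to $\phi+\psi$ is exactly what the paper uses implicitly. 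One small imprecision: in your telescoping, when the difference falls on the third (derivative-carrying) child of a cubic vertex, the $\partial_x$ acts on the \emph{difference} rather than on a surviving factor; this is harmless since the difference inherits the same Fourier support $\{|\xi|\lesssim |\TT^\infty|N\}$, so $\|\partial_x(\cdot)\|_{L^2}\lesssim N\|\cdot\|_{L^2}$ there as well.
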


\begin{proof}
From  \eqref{SumTree} and \eqref{Picard_j}, we have
\begin{align}
 \Xi_j ( \phi + \psi) - \Xi_j (\phi)
& =  \sum_{\substack{ \TT \in \BT^{3,5}(k,p) \\ j=k+p }} \big( \Psi_{\phi+ \psi} (\TT) - \Psi_\phi (\TT)\big)  \notag \\
& =  \sum_{\substack{ \TT \in \BT^{3,5}(k,p) \\ j=k+p }} 
\sum_{\phi_i \in \{\phi, \psi\}}
 \Psi (\TT; \phi_1, \dots, \phi_{2k+4p+1}),
\label{nonlin22}
\end{align}

\noi
where
the second summation in $\phi_1, \dots, \phi_{2k+4p+1}$
is over all possible combinations 
of $\phi_i \in \{\phi, \psi\}$
with at least one occurrence of $\psi$.
Given $\TT \in \BT^{3,5}(k,p)$ with $(k,p) \in (\N\cup \{0\})^2$, 
we have $ \supp \F[\Psi (\TT; \phi_1, \dots, \phi_{2k + 4p +1})] \subset \{ \xi \in \R: | \xi | \leq 6 (2k+4p+1) N \} $ provided $A, M \ll N$.
Without loss of generality, we assume $\phi_{1} = \psi$.
A similar induction argument as in Lemma \ref{LEM:nonlin1}
and the fact $\| \psi\|_{\F L^1} \les RA$ yields

\noi
\begin{align*}
\begin{split}
\| \Psi (\TT; \psi, \phi_2, \dots, \phi_{2k+4p+1})(t)\|_{L^2}
& \leq  C^j t^{j}
 \| \psi\|_{L^2} N^k
 \prod_{\substack{j = 2}}^{2k+4p+1} \| \phi_j\|_{\F L^1} \\
& \leq  C^j t^j  \| \psi\|_{L^2} N^k \cdot (RA) ^{2k+4p} \\
&  \leq C^j t^j  \| \psi\|_{L^2} \cdot (RA) ^{4k+4p} ,
\end{split}
\end{align*}

\noi
provided $R^2A^2 \gg N$.
\end{proof}

To conclude this subsection, 
we remark that the series
\[
v_1 (t) = \sum_{j=0}^\infty \Xi_j (\phi + \psi) (t)
\] 

\noi
converges on $[-T,T]$ under the conditions of Lemma \ref{LEM:nonlin4}, as long as $\|\psi\|_{\F L^1} \les RA$ and $T \ll (RA)^{-4}
\ll N^{-2} \ll 1$.
In particular, the series \eqref{power2} converges as long as $T \ll (RA)^{-4} \ll N^{-2} \ll 1$. Let us note that the function $v_1$ solves the initial value problem \eqref{GdNLS} and \eqref{Duhamel} with initial data $\phi$ replaced by $\phi+ \psi$.

\subsection{First Picard iterate}
In this subsection, we obtain a lower bound on the second Picard iterate 
($\Xi_1 (\phi)$ in \eqref{Picard_j}). This will allow us to prove later that this term represents the main contribution to the series expansion \eqref{power2} and to the norm inflation phenomena. See Subsection \ref{SUBSEC:3.2}.
To this purpose, we first recall the following elementary bounds on characteristic functions of intervals.

\begin{lemma}\label{LEM:conv}
For any $a,b,c,d,e,\xi \in \ft \M$ and $A \ge 1$, there exists $c >0$ such that 
\begin{align*}
\ind_{a + Q_A}* \ind_{b + Q_A} * \ind_{c + Q_A} * \ind_{d + Q_A} * \ind_{e + Q_A} (\xi)\geq cA^4 \ind_{a+b+c+d +e+Q_A}(\xi).
\end{align*}
\end{lemma}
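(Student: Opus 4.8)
The plan is to collapse the five arbitrary centres into the single shift $a+b+c+d+e$ using the translation invariance of convolution, and then to prove a clean lower bound for the five-fold self-convolution of $\ind_{Q_A}$ by a short induction. Write $g_n:=\ind_{Q_A}*\cdots*\ind_{Q_A}$ ($n$ factors). Since $\ind_{a+Q_A}(\xi)=\ind_{Q_A}(\xi-a)$ and convolution commutes with translations,
\[
\big(\ind_{a+Q_A}*\ind_{b+Q_A}*\ind_{c+Q_A}*\ind_{d+Q_A}*\ind_{e+Q_A}\big)(\xi)=g_5\big(\xi-(a+b+c+d+e)\big).
\]
Because the right-hand side of the asserted inequality vanishes unless $\xi-(a+b+c+d+e)\in Q_A$, it suffices to exhibit an absolute constant $c>0$, independent of $A\ge1$, with $g_5(\eta)\ge cA^4$ for every $\eta\in Q_A$.

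To establish this I would first record the elementary identity $g_2(\eta)=|Q_A\cap(\eta+Q_A)|=A-|\eta|$ for $|\eta|\le A$ (and $0$ otherwise), so that $g_2(\eta)\ge A/2$ on $Q_A$, and note that each $g_n$ is a nonnegative even function. Setting $m_n:=\inf_{\eta\in Q_A}g_n(\eta)$, the recursion is driven by the bound, valid for $\eta\in Q_A$,
\[
g_{n+1}(\eta)=\int_{Q_A}g_n(\eta-y)\,dy\ge\int_{Q_A\cap(\eta+Q_A)}g_n(\eta-y)\,dy\ge\frac{A}{2}\,m_n,
\]
since $|Q_A\cap(\eta+Q_A)|=A-|\eta|\ge A/2$ for $\eta\in Q_A$, and $\eta-y\in Q_A$ up to a null set for $y$ in that intersection (here one uses evenness of $g_n$), whence $g_n(\eta-y)\ge m_n$. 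Thus $m_{n+1}\ge\tfrac{A}{2}m_n$, and starting from $m_2\ge A/2$ one gets $m_3\ge A^2/4$, $m_4\ge A^3/8$, $m_5\ge A^4/16$; so $c=\tfrac1{16}$ works. (On $\T$ the argument is identical, replacing Lebesgue measure on $\R$ by counting measure on $\ft\T=\Z$, since $\#\big((Q_A\cap\Z)\cap(\eta+Q_A\cap\Z)\big)\gtrsim A$ for $A\ge1$.)

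There is essentially no obstacle here: the statement is soft, and the only points requiring a little care are the translation-invariance reduction that makes the five distinct centres irrelevant, and keeping the half-open intervals $Q_A=[-A/2,A/2)$ and the constraint $|\eta|\le A/2$ consistent through the induction (using evenness of $g_n$ to dispose of the harmless reflection of $Q_A$). Everything else is routine bookkeeping.
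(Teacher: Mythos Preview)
Your argument is correct. The paper does not actually supply a proof of this lemma: it is introduced with the phrase ``we first recall the following elementary bounds'' and then used directly in Proposition~\ref{PROP:lw_bd}, so there is nothing to compare against. Your reduction by translation invariance followed by the recursion $m_{n+1}\ge \tfrac{A}{2}m_n$ on $m_n:=\inf_{Q_A}g_n$ is a clean way to produce the constant $c=\tfrac1{16}$; the only delicate points---the half-open convention $Q_A=[-A/2,A/2)$ and the use of (a.e.) evenness of $g_n$---you have already flagged and handled correctly.
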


We now state the lower bound on $\Xi_1 (\phi)$.

\noi
\begin{proposition}\label{PROP:lw_bd}
Let $\phi$  be as in \eqref{phi10}.
Then, 
for $0 < t \ll N^{-2}$ and $R^2 A^2 \gg N$, 
we have
\begin{align}
\|  \Xi_1 ( \phi)(t)\|_{H^s} \ges f_s(A) \cdot t  R^5 A^4,
\label{lw_bd}
\end{align}
\noi
where $f_s(A)$ is the function defined in \eqref{DEFf_s}.
\end{proposition}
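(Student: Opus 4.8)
The plan is to exhibit the high-to-low frequency cascade inside $\Xi_1(\phi)$ and to show that it comes entirely from the \emph{quintic} part of the nonlinearity. Since $\BT^{3,5}(1,0)$ (resp.\ $\BT^{3,5}(0,1)$) consists of the unique tree whose root has three (resp.\ five) terminal children, \eqref{SumTree}--\eqref{Picard_j} give $\Xi_1(\phi)(t) = \J^3[S(t)\phi](t) + \mathcal{K}^5[S(t)\phi](t)$. By Lemma~\ref{LEM:nonlin2} with $(k,p)=(1,0)$, the cubic term satisfies $\|\J^3[S(t)\phi](t)\|_{H^s} \les f_s(A)\, tNR^3A^2$, which under the standing hypothesis $R^2A^2 \gg N$ is $\ll f_s(A)\, tR^5A^4$. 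Hence, by the triangle inequality, it suffices to prove $\|\mathcal{K}^5[S(t)\phi](t)\|_{H^s} \ges f_s(A)\, tR^5A^4$.

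First I would compute $\mathcal{K}^5[S(t)\phi]$ on the Fourier side. Using that $\ft\phi$ is real-valued and of constant sign, one finds (writing $\xi_5 := \xi - \xi_1 + \xi_2 - \xi_3 + \xi_4$ and $\Phi := \xi_1^2 - \xi_2^2 + \xi_3^2 - \xi_4^2 + \xi_5^2 - \xi^2$) that $\F[\mathcal{K}^5[S(t)\phi](t)](\xi)$ equals $-\tfrac12 e^{-it\xi^2}$ times the integral, with respect to $d\xi_1 d\xi_2 d\xi_3 d\xi_4$ over the set $\mathcal{D}(\xi)$ of $(\xi_1,\xi_2,\xi_3,\xi_4)$ on which $\ft\phi(\xi_1)\ft\phi(\xi_2)\ft\phi(\xi_3)\ft\phi(\xi_4)\ft\phi(\xi_5)$ is nonzero, of $\big(\int_0^t e^{-it'\Phi}\, dt'\big)\ft\phi(\xi_1)\ft\phi(\xi_2)\ft\phi(\xi_3)\ft\phi(\xi_4)\ft\phi(\xi_5)$. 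The crux is the following arithmetic fact: by \eqref{phi10} each $\xi_j$ lies in $2N+Q_A$ or $3N+Q_A$, so $\xi = \xi_1 - \xi_2 + \xi_3 - \xi_4 + \xi_5$ differs by $O(A)$ from $(c_1-c_2+c_3-c_4+c_5)N$ with $c_j \in \{2,3\}$; since $c_1+c_3+c_5 \in \{6,7,8,9\}$ while $c_2+c_4 \in \{4,5,6\}$, restricting to output frequencies $|\xi| < \tfrac{A}{2} \ll N$ forces the \emph{unique} configuration $\xi_1,\xi_3,\xi_5 \in 2N+Q_A$, $\xi_2,\xi_4 \in 3N+Q_A$. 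On $\mathcal{D}(\xi)$ the amplitude is then constant, $\ft\phi(\xi_j) \equiv R$, so no cancellation occurs, and expanding the squares gives $\Phi = 3(2N)^2 - 2(3N)^2 + O(NA) = -6N^2 + O(NA)$, so that $|\Phi| \le 7N^2$ because $A \ll N$.

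Next, since $0 < t \ll N^{-2}$, one may arrange $|t'\Phi| \le \tfrac12$ for all $t' \in [0,t]$, whence $\Re \int_0^t e^{-it'\Phi}\, dt' = \int_0^t \cos(t'\Phi)\, dt' \ge \tfrac12 t$ uniformly on $\mathcal{D}(\xi)$. Feeding this into the Fourier representation — all contributions now having the same sign — yields $|\F[\mathcal{K}^5[S(t)\phi](t)](\xi)| \ges tR^5\, |\mathcal{D}(\xi)|$ for $|\xi| < \tfrac{A}{2}$. After the change of variables $\xi_2 \mapsto -\xi_2$, $\xi_4 \mapsto -\xi_4$, the measure $|\mathcal{D}(\xi)|$ equals the fivefold convolution $(\ind_{2N+Q_A} * \ind_{-3N+Q_A} * \ind_{2N+Q_A} * \ind_{-3N+Q_A} * \ind_{2N+Q_A})(\xi)$, which by Lemma~\ref{LEM:conv} is $\ges A^4$ on $Q_A$. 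Hence $|\F[\mathcal{K}^5[S(t)\phi](t)](\xi)| \ges tR^5A^4$ for all $|\xi| < \tfrac{A}{2}$, so $\|\mathcal{K}^5[S(t)\phi](t)\|_{H^s}^2 \ges (tR^5A^4)^2 \int_{|\xi|<A/2} \jb{\xi}^{2s}\, d\xi$. Finally $\int_{|\xi|<A/2} \jb{\xi}^{2s}\, d\xi$ is comparable to $1$, to $\log A$, or to $A^{1+2s}$ according as $s < -\tfrac12$, $s = -\tfrac12$, or $-\tfrac12 < s < 0$, i.e.\ to $f_s(A)^2$ as in \eqref{DEFf_s}; taking square roots gives $\|\mathcal{K}^5[S(t)\phi](t)\|_{H^s} \ges f_s(A)\, tR^5A^4$, and combined with the first paragraph this proves \eqref{lw_bd}.

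The one genuinely delicate point is the step extracting the single resonant frequency configuration together with the bound $|\Phi| \les N^2$: it is exactly the interplay between the frequency gap built into \eqref{phi10} — which simultaneously pins down the configuration and forces $|\Phi| \sim N^2$ — and the time restriction $t \ll N^{-2}$ — which then renders the oscillatory factor $e^{-it'\Phi}$ essentially trivial — that makes the sharp power $R^5$ in the lower bound survive. Everything else is bookkeeping: the convolution lower bound is Lemma~\ref{LEM:conv}, and the value of the weight integral is elementary.
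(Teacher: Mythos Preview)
Your proof is correct and follows essentially the same approach as the paper: decompose $\Xi_1(\phi)$ into its cubic and quintic parts, dispose of the cubic term via Lemma~\ref{LEM:nonlin2} and the hypothesis $R^2A^2 \gg N$, and lower-bound the quintic term by combining the phase bound $|\Phi| \les N^2$ (valid for $t \ll N^{-2}$) with Lemma~\ref{LEM:conv} and the weight computation $\|\jb{\xi}^s\|_{L^2(Q_A)} \sim f_s(A)$. The only cosmetic difference is that you explicitly isolate the \emph{unique} frequency configuration contributing to $|\xi| < A/2$ (so no cancellation can occur), whereas the paper bounds $|\Phi| \les N^2$ uniformly over \emph{all} configurations and uses positivity of $\Re\, e^{it'\Phi}$ to discard all but one; both routes lead to the same pointwise lower bound $|\F[\mathcal{K}^5[S(t)\phi](t)](\xi)| \ges tR^5A^4$ on $Q_A$.
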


\noi	
\begin{proof}
From \eqref{SumTree}, we have,
\begin{align}
\Xi_1(\phi)(t)
: = \Xi_{1,0}^{3,5} (\phi)(t) +  \Xi_{0,1}^{3,5} (\phi)(t) = \Psi_{\phi}(\TT_3)(T) + \Psi_{\phi}(\TT_5)(T),
\label{lw_bd1}
\end{align}
where $\TT_3$ (resp. $\TT_5$) is the tree with one parent node and three (resp. five) descendents. Namely, $\Psi_{\phi}(\TT_3)(t) = \mathcal{J}^3[S(t)\phi]$ and $\Psi_{\phi}(\TT_5)(t) = \mathcal{K}^5[S(t)\phi]$, see \eqref{ternary_quinary_op}. From Lemma \ref{LEM:nonlin2}, we have

\noi
\begin{align}
\| \Psi_{\phi}(\TT_3)(t)\|_{H^s} \les f_s(A) \cdot tN (RA)^2 R .
\label{lw_bd2}
\end{align}

\noi
We now turn to the quintic term $\Psi_{\phi}(\TT_5)$
and have

\noi
\begin{align} 
\label{lw_bd3}
\begin{split}
\F\big[\Psi_{\phi}(\TT_5)(t)\big](\xi)
&  = - \frac{1}{2} e^{-i  |\xi|^2 t }
\intt_{\xi = \xi_1 - \xi_2 + \cdots + \xi_5 } \\
& \hphantom{XXX} \times \int_0^t  e^{ i t' (|\xi|^2 - |\xi_1|^2 + |\xi_2|^2 - |\xi_3|^2 + |\xi_4|^2 - |\xi_5|^2)  }dt'
 \\
& \hphantom{XXX} \times  \ft \phi(\xi_1) \overline{\ft \phi}(\xi_2) \ft \phi(\xi_3 )  \overline{\ft \phi}(\xi_4)
\ft \phi(\xi_5)  
d\xi_1 d\xi_2d\xi_3d\xi_4 d\xi_5.
\end{split}
\end{align}

\noi
From \eqref{phi10}, we have $|\xi_j| \les N$ for 
$\xi_j \in \supp \ft \phi$. Then, since $\xi = \xi_1 - \xi_2 + \xi_3-\xi_4 + \xi_5 $ we have

\noi
\begin{align*}
\big||\xi_1|^2 - |\xi_2|^2 + |\xi_3|^2 - |\xi_4|^2 + |\xi_5|^2 + |\xi|^2 \big| \les N^2,
\end{align*}

\noi
which implies that,
\noi
\begin{align}
\Re  \Big( e^{ i t' (|\xi_1|^2 - |\xi_2|^2 + |\xi_3|^2 - |\xi_4|^2 + |\xi_5|^2 + |\xi|^2)  } 
\Big) 
\geq \frac 12
\label{lw_bd4}
\end{align}

\noi
holds for all $0< t' \ll N^{-2}$.
Thus, by
\eqref{lw_bd3},
\eqref{lw_bd4}, 
and 
Lemma \ref{LEM:conv},
we arrive at 

\noi
\begin{align}
|\F\big[ \Psi_{\phi}(\TT_5)(t)\big](\xi)|
\ges t R^5 A^4 \cdot \ind_{Q_A}(\xi).
\label{lw_bd5}
\end{align}
Noting that
$\| \jb{\xi}^s\|_{L^2_\xi( Q_A)} 
\sim f_s(A)$,
we obtain from \eqref{lw_bd5},
\begin{equation}
\| \Psi_{\phi}(\TT_5)(t) \|_{H^s} \ges f_s(A) \cdot t (R A)^4 R.
\label{lw_bd6}
\end{equation}
Finally, by collecting \eqref{lw_bd1}, \eqref{lw_bd2}, \eqref{lw_bd6} along with the the assumption $R^2 A ^2 \gg N$, we obtain \eqref{lw_bd}.
\end{proof}


\section{Proof of Theorem \ref{THM:ill1}}

In this section, we present the proof of Theorem \ref{THM:ill1}. A density argument reduces the proof of Theorem \ref{THM:ill1} to the following statement.

\noi
\begin{proposition}\label{PROP:main-dNLS}
Let $ s < 0 $. Fix $\psi \in \mathcal S (\M)$ such that $\ft \psi  \in C^{\infty}_0(\R)$.
Then, 
given any $n \in \N$, 
there exist a 
solution $v_n$ to \eqref{GdNLS} 
and $t_n  \in \big(0, \frac 1n\big) $ such that 
\begin{align}
 \| v_n(0) - \psi \|_{H^s(\M)} < \tfrac 1n \qquad \text{ and } 
\qquad \| v_n(t_n)\|_{H^s(\M)} > n.
\label{main2}
\end{align}
\end{proposition}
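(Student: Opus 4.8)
The plan is to prove Proposition~\ref{PROP:main-dNLS} via the standard perturbative norm‑inflation mechanism: we run \eqref{GdNLS} with the special profile $\phi$ of \eqref{phi10} added on top of the fixed datum $\psi$, and we extract the inflation from the second Picard iterate $\Xi_1(\phi)$, whose size is driven by its \emph{quintic} component through Proposition~\ref{PROP:lw_bd}. All the hard multilinear analysis is already contained in Section~\ref{SEC:2}; what is left is an organisational argument plus a careful calibration of $N$, $R=R(N)$, $A=A(N)$ and of the inflation time $t_n$.

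First I would fix $\phi=\phi_N$ as in \eqref{phi10} (with $R,A$ to be chosen) and take $\phi+\psi$ as initial datum. Since $\psi\in\mathcal{S}(\R)$ with $\ft\psi\in C^\infty_0(\R)$, one has $\|\psi\|_{\F L^1}<\infty$ and $\supp\ft\psi\subset[-M,M]$ for some fixed $M\ge0$; hence, as soon as $RA\gg1$ and $N\gg M$, the remark closing Subsection~\ref{SUBSEC:multi} shows that $v_n(t):=\sum_{j\ge0}\Xi_j(\phi+\psi)(t)$ converges on $[0,T]$ for any $T\ll(RA)^{-4}$ and solves the Duhamel equation \eqref{Duhamel} with datum $\phi+\psi$; as $\phi+\psi\in L^2(\R)$, this $v_n$ is in fact a global solution of \eqref{GdNLS} by the $L^2(\R)$ well-posedness of \cite{HGKNV} transported through the gauge. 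By construction $v_n(0)=\phi+\psi$, so \eqref{data} gives $\|v_n(0)-\psi\|_{H^s}=\|\phi\|_{H^s}\sim N^sRA^{1/2}$, which is $<\tfrac1n$ as soon as $N^{-|s|}RA^{1/2}\to0$.

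Next I would set $t_n:=c(RA)^{-4}$ for a small absolute constant $c>0$ (so $t_n\in(0,\tfrac1n)$ once $RA\gg1$, and $t_n\ll N^{-2}$ once $R^2A^2\gg N$), and decompose
\begin{align*}
v_n(t_n)=\Xi_1(\phi)(t_n)+\Xi_0(\phi+\psi)(t_n)+\big(\Xi_1(\phi+\psi)-\Xi_1(\phi)\big)(t_n)+\sum_{j\ge2}\Xi_j(\phi+\psi)(t_n).
\end{align*}
The first term is the engine of the inflation: Proposition~\ref{PROP:lw_bd} gives $\|\Xi_1(\phi)(t_n)\|_{H^s}\ges f_s(A)\,t_nR^5A^4\sim c\,f_s(A)R$, and here the hypothesis $R^2A^2\gg N$ is precisely what allows the quintic tree $\Psi_\phi(\TT_5)$ to dominate the derivative cubic tree $\Psi_\phi(\TT_3)$ in that proposition. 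For the remaining three terms: $S(t_n)$ is unitary on $H^s$, so $\|\Xi_0(\phi+\psi)(t_n)\|_{H^s}=\|\phi+\psi\|_{H^s}\le\|\phi\|_{H^s}+\|\psi\|_{H^s}$; using $\|\cdot\|_{H^s}\le\|\cdot\|_{L^2}$ for $s<0$ and Lemma~\ref{LEM:nonlin4} with $j=1$, $\|(\Xi_1(\phi+\psi)-\Xi_1(\phi))(t_n)\|_{H^s}\les\|\psi\|_{L^2}\,t_nR^4A^4=c\|\psi\|_{L^2}$; and, writing $\Xi_j(\phi+\psi)=\Xi_j(\phi)+(\Xi_j(\phi+\psi)-\Xi_j(\phi))$, Lemmas~\ref{LEM:nonlin3} and~\ref{LEM:nonlin4} together with $t_n(RA)^4=c\ll1$ give
\begin{align*}
\sum_{j\ge2}\|\Xi_j(\phi+\psi)(t_n)\|_{H^s}\les\big(f_s(A)R+\|\psi\|_{L^2}\big)\sum_{j\ge2}(Cc)^j\les c^2\big(f_s(A)R+\|\psi\|_{L^2}\big).
\end{align*}
Choosing $c$ small (depending only on the absolute constant $C$), the last display is at most $\tfrac12\|\Xi_1(\phi)(t_n)\|_{H^s}$ for $N$ large, while the first two error terms are bounded by a constant $C_\psi$ depending only on $\psi$. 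Hence $\|v_n(t_n)\|_{H^s}\ge\tfrac12\|\Xi_1(\phi)(t_n)\|_{H^s}-C_\psi\ges c\,f_s(A)R-C_\psi$, which exceeds $n$ once $f_s(A)R$ is large enough — guaranteed by the parameter choice below for $N$ large.

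It remains to exhibit exponents making every constraint simultaneously compatible — $R^2A^2\gg N$, $1\ll A\ll N$, $RA\to\infty$, $N^{-|s|}RA^{1/2}\to0$ and $f_s(A)R\to\infty$ — and this is the one genuinely delicate point, since the first condition is forced on us by the cubic derivative loss. Writing $A=N^{a}$, $R=N^{b}$, these requirements become $a+b>\tfrac12$, $0<a<1$, $b+\tfrac a2<|s|$, and (reading off \eqref{DEFf_s}) $b>0$ if $s\le-\tfrac12$, respectively $a(\tfrac12+s)+b>0$ if $-\tfrac12<s<0$. A short computation shows that for every $s<0$ one may take $a\in\big(\max(0,1-2|s|),\,1\big)$ and then $b$ in the nonempty open interval with endpoints $\max\big(\tfrac12-a,\,-a(\tfrac12+s)\big)$ and $|s|-\tfrac a2$; with such $(a,b)$ all the displayed conditions hold for $N$ large, so $\|\phi\|_{H^s}<\tfrac1n$, $t_n\in(0,\tfrac1n)$ and $\|v_n(t_n)\|_{H^s}>n$, i.e.\ \eqref{main2}. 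Finally, Theorem~\ref{THM:ill1} follows by the density reduction already indicated: approximate a given $\phi\in H^s(\R)$ within $\e/2$ by some $\psi\in\mathcal{S}(\R)$ with $\ft\psi\in C^\infty_0(\R)$ and invoke the Proposition with $n\ge2/\e$. The main obstacle is thus not any individual inequality — these are all supplied by Section~\ref{SEC:2} — but checking that the band $R^2A^2\gg N$ imposed by the cubic term still leaves enough freedom to keep $\|\phi\|_{H^s}$ small while $\Xi_1(\phi)$ blows up in time $<1/n$.
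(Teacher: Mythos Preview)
Your proof is correct and follows essentially the same approach as the paper's: both isolate $\Xi_1(\phi)$ as the dominant term via Proposition~\ref{PROP:lw_bd} and control the remainder through Lemmas~\ref{LEM:nonlin3}--\ref{LEM:nonlin4}, then calibrate $N,R,A,T$ so that $\|\phi\|_{H^s}\to0$ while $f_s(A)\,TR^5A^4\to\infty$. Your choice $t_n=c(RA)^{-4}$ and the unified power-law parametrisation $A=N^a$, $R=N^b$ streamline the paper's separate treatment of conditions (i)--(vi) across three ranges of $s$ (and in particular avoid the logarithmic corrections the paper introduces at $s=-\tfrac12$), but the underlying mechanism is identical.
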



In what follows, we prove Proposition \ref{PROP:main-dNLS}. Given $n\in \N$, let $N_n, A_n, R_n, T_n \gg 1$ to be chosen later. We omit the dependence in $n$ of these constants from now on for convenience.
We set $v_{0} = \psi + \phi$ with $\phi$ as in \eqref{phi10}.

Denote by $v = v_n$ the global solution to \eqref{GdNLS} with initial data $v (0) = v_{0}$.


\subsection{Proof of Proposition \ref{PROP:main-dNLS}}
\label{SUBSEC:3.2}
We now prove Proposition \ref{PROP:main-dNLS} 
We claim that it suffices to show that the following properties hold:
\begin{align*}
 \textup{(i)} & \quad N^s R A^\frac{1}{2} \ll \frac 1n, \\ 
 \textup{(ii)} & \quad T R^4 A^4 \ll 1,  \\ 
 \textup{(iii)} & \quad  n \ll f_s(A) \cdot T R^5 A^4,\\ 
 \textup{(iv)} & \quad  N \ll R^2 A^2, \\
 \textup{(v)} & \quad A \ll N,\\
 \textup{(vi)} & \quad T \ll N^{-2},
 \end{align*}
 
\noi
for some $A, R, T$, and $N \gg 1$, depending on $n$ and $\psi$. 

We first prove the above claim, i.e. we show how conditions (i)-(vi) 
imply Proposition \ref{PROP:main-dNLS}. 
By Lemma \ref{LEM:nonlin3} and Lemma \ref{LEM:nonlin4},
we have the following series expansion representation of solution $v$
\begin{align}
\label{power3}
v (t) : = \sum_{j = 0}^\infty \Xi_j (v_{0})(t), 
\end{align}

\noi
provided conditions (ii) and (v) hold.
By \eqref{data}, the first condition (i) ensures that the first inequality in \eqref{main2} holds.
By \eqref{power3}, \eqref{data}, Proposition \ref{PROP:lw_bd}, Lemma \ref{LEM:nonlin3}, and Lemma \ref{LEM:nonlin4} (which hold under (iv), (v) and (iv)), we have
\begin{align*}
 \| v (T) \|_{H^s} & \geq  \| \Xi_1 ( \phi)(T) \|_{H^s} - 
 \| \Xi_0 ( \phi + \psi )(T) \|_{H^s} \\
 & \hphantom{XX} -  \sum_{j=1}^{\infty} \| \Xi_1 (\phi)(T)
- \Xi_1 (\psi + \phi)(T)\|_{H^s}  - \sum_{j = 2}^\infty  \|
 \Xi_j ( \phi )(T) \|_{H^s}   \\
 & \ges f_s(A) \cdot TR^5 A^4   - (1+ N^sRA^{\frac12})  -  TR^4 A^4  -   f_s(A) \cdot T^2R^9A^8 \\
 & \sim TR^5 A^4 \cdot f_s(A) \ge n,
\end{align*} 

\noi
where we used (i), (ii) and (iii) in the last inequality.
Finally, choosing $N$ sufficiently large such that 
$R^4 A^4 \ge n$, together with (ii), imply $t_n = T \in (0,\frac1n)$.
This proves Proposition \ref{PROP:main-dNLS}. 

\medskip

It thus remains to 
verify the conditions (i)-(v).
In what follows, 
we consider the following three cases:

\medskip

\noi
$\bullet$
{\bf Case 1:}  $s <  -\frac 12$.
We set
\begin{align*}
A= N^{\frac \dl 5}, 
\quad 
R = N^{\frac 12}, 
\quad \text{and} 
\quad
T= N^{ - 2 - \delta}, 
\end{align*}
with $ \dl >0 $ sufficiently small such that $s + \frac12 + \frac\delta{10} <0$. 
The conditions \textup{(ii)}, (iv),  \textup{(v)} and (vi) are trivially satisfied for 
$N \gg 1$.
By choosing $N$ large enough, 
we have
\begin{align*}
 N^s R A^{\frac12} & = N^{ s + \frac{1}{2} + \frac\dl{10} } \ll \frac 1n, \\
TR^5 A^{4} &= N^{\frac 12 - \frac\dl5} \gg n,
\end{align*}

\noi
which verify the conditions (i) and (iii) as $f_s(A) =1$ in this case.

\medskip

\noi
$\bullet$
{\bf Case 2:} $s =  - \frac12$.
In this case we have $f_s(A)= (\log A)^{\frac 12}$. 
Set

\noi
\begin{align*}
A = (\log N)^{\frac 32}, 
\quad 
R= \frac{ N^{\frac 12}}{\log N}, 
\quad \text{and} 
\quad
T = N^{ - 2 - \delta}, 
\end{align*}

\noi
with $ 0 < \dl \ll 1 $. 
The conditions \textup{(ii)}, (iv), \textup{(v)} and (vi) are trivially satisfied for 
$N \gg 1$.
By choosing $N$ large enough, 
we have

\noi
\begin{align*}
N^{-\frac 12} R A^{\frac12} & = (\log N)^{-\frac 14} \ll \frac 1n, \\
T R^5 A^{4} f_{s} (A) & \ges N^{\frac 14} \gg n .
\end{align*}

\noi
Thus, the conditions (i) and (iii) are satisfied.

\medskip

\noi
$\bullet$
{\bf Case 3:} $-\frac 12<s <0$. In this case, 
we have $f_s(A) = A^{s+ \frac 12}$. Choose 

\noi
\begin{align*}
A = N^{1+2s+\frac94 \dl}, 
\quad 
R = N^{- \frac 1 2 - 2s - \frac{17}8\dl}, 
\quad \text{and} 
\quad
T = N^{ - 2 - \delta}, 
\end{align*}

\noi
with $ 0 < \dl \ll 1 $ satisfying $2s + \frac94 \dl < 0$
and $2s^2 - \frac{3\dl}2 + \frac94 \dl s >0$. 
We then have
\noi
\begin{align*}
N^s R A^{\frac12} & = N^{- \dl } \ll \frac 1n, \\
 TR^4 A^4 & = N^{-\frac{\dl}{2}}  \ll 1, \\
 T R^5 A^{s+\frac 9 2} & = N^{2s^2 - \frac{3\dl}2 + \frac94\dl s} \gg n, \\
 R^2 A^2 &= N^{1+ \frac \dl 4 } \gg N,\\
 A& =  N^{1+2s+\frac94 \dl} \ll N,
\end{align*}

\noi
provided $N$ large enough.
This verifies (i) - (vi).

\noi
\begin{remark}
This framework fails to provide the norm inflation at the endpoint regularity $s=0$, which concurs with Remark \ref{REM:sharp}. 
As a matter of fact, 
if $s=0$, 
the condition \textup{(i)} writes $ R \ll A^{- \frac 12} $, 
which implies $R^2 A^2 \ll A$. 
This is incompatible with conditions \textup{(iv)} and \textup{(v)}.
\end{remark}

\begin{ackno}\rm
Y.W.~ and Y.Z.~would like to thank Tadahiro Oh for suggesting this problem. Y.Z.~was supported by the 
 European Research Council (grant no.~864138 ``SingStochDispDyn"). Y.W. was supported by supported by the EPSRC New Investigator Award (grant no. EP/V003178/1).
\end{ackno}

\end{document}